\newcounter{Def}[section]
\theoremstyle{definition}
\newtheorem*{rep@theorem}{\rep@title}
\newcommand{\newreptheorem}[2]{%
\newenvironment{rep#1}[1]{%
 \def\rep@title{#2 \ref{##1}}%
 \begin{rep@theorem}}%
 {\end{rep@theorem}}}
\newtheorem{Definition}[Def]{Definition}
\newtheorem{Theorem}[Def]{Theorem}
\newtheorem{Remark}[Def]{Remark}
\newtheorem{Lemma}[Def]{Lemma}
\newtheorem{Proposition}[Def]{Proposition}
\newtheorem{Corollary}[Def]{Corollary}
\def\C          {\mathbb C}
\def\Z          {\mathbb Z}
\def\K          {\mathbb K}
\def\ca         {{\mathcal A}}
\def\cb         {{\mathcal B}}
\def\cc         {{\mathcal C}}
\def\cd         {{\mathcal D}}
\def\cm         {{\mathcal M}}
\def\cn         {{\mathcal N}}
\def\cl         {{\mathcal L}}
\def\cz         { {\mathcal Z}}
\def\op          {{}^{\text{op}}}
\def\ot         { {\,\otimes\,}}
\def\boxt         { {\,\boxtimes\,}}
\def\mod        {\text{Mod}}
\def\bimod        {\text{Bimod}}
\def\FP        {\text{FPdim}}
\def\ind        {\text{Ind}}
\def\res        {\text{Res}}
\def\Gsum    {\bigoplus_{g\in G} }
\newcommand{\Hom}{\operatorname{Hom}}
\newcommand{\End}{\operatorname{End}}
\newcommand{\Fun}{\operatorname{Fun}}
\newcommand{\innhom}{\underline{\Hom}}
\title{Equivariant Morita theory for graded tensor categories}
\author[C\'esar Galindo]{C\'esar Galindo}
\author[David Jaklitsch]{David Jaklitsch}
\author[Christoph Schweigert]{Christoph Schweigert}
\address{Departamento de Matem\'aticas, Universidad de los Andes, Bogot\'a, Colombia}
\email{cn.galindo1116@uniandes.edu.co}
\address{Department of Mathematics, Universit\"at Hamburg, Hamburg, Germany}
\email{david.jaklitsch@uni-hamburg.de}
\address{Department of Mathematics, Universit\"at Hamburg, Hamburg, Germany}
\email{christoph.schweigert@uni-hamburg.de}
\date{}
\begin{document}
\vspace*{-10mm}
	\begin{flushright}
		\small
		{\sffamily [ZMP-HH/21-14]} \\
		\textsf{Hamburger Beitr\"age zur 
Mathematik Nr.\ 902}
	\end{flushright}
	
\vspace{8mm}
\begin{abstract}
We extend categorical Morita equivalence to finite tensor categories graded by a finite group $G$. We show that two such categories are graded Morita equivalent if and only if their equivariant Drinfeld centers are equivalent as braided $G$-crossed tensor categories.    
\end{abstract}
\maketitle

\section{Introduction}

Higher category theory has become a dominant tool for tackling structural questions in the theory of finite tensor categories. Rather than study tensor equivalence, Morita equivalence, a broader relation, has been very useful in the classification of finite tensor categories. Morita equivalence of tensor categories can be defined basically in the same way as in ring theory; that is, two tensor categories are Morita equivalent if there exists an invertible bimodule category between them. Morita equivalence for tensor categories implies that their $2$-categories of exact module categories are $2$-equivalent. \\

In ring theory, Morita equivalent rings have isomorphic centers;
the converse is not true: there are rings with isomorphic
centers that are not Morita equivalent.
In contrast, the 2-category of (exact) representations of a 
finite tensor category is entirely determined by its Drinfeld
center. This characterization tells us that the transit from finite tensor categories to finite braided categories through the center captures the Morita equivalence relation. This fact is also relevant in the study of three-dimensional topological field theories: the Turaev-Viro construction, applied to Morita equivalent (spherical) fusion categories, gives rise to equivalent once-extended topological field theories since these theories can be obtained by applying the Reshetikhin-Turaev construction to their Drinfeld center, \cite{MR3674995} and \cite{BK1,BK2,BK3}.\\



A direction that has generated intense research is the interplay of symmetry with braided tensor categories, \cite{turaev2010homotopy, ENO3, SCJZ, LKW}. In the presence of symmetries, tensor categories acquire a finer classification; in this case, the role played for braided categories and finite tensor categories is done for braided $G$-crossed categories and $G$-graded tensor categories, \cite{turaev2010homotopy}. Interestingly, the input for equivariant versions of the Turaev-Viro and Reshetikhin-Turaev TFTs are precisely (spherical) $G$-graded tensor categories and (modular) braided $G$-crossed tensor categories.
It has been shown in this case \cite[Theorem 1.1]{TUvi} 
that the Turaev-Viro construction
for a spherical $G$-graded fusion category $\cc$ and a certain $G$-braided
fusion category, the equivariant center $\cz_G(\cc)$ provide
equivalent once-extended homotopy TFTs.
\\

This paper explores equivariant Morita theory for finite tensor categories faithfully graded over a finite group $G$, its $2$-categorical interpretation, and its relation with the equivariant center introduced in \cite{GNN}, without imposing semisimplicity.
We will briefly mention some definitions which are discussed in detail in the body of the article. Let $\cc$ be a faithfully  $G$-graded finite tensor category. In Subsection \ref{subsection_2-category_of_graded_modules}, the $2$-category $\textbf{\mod}^{\text{Gr}}(\cc)$ of exact $G$-graded $\cc$-module categories is introduced and a canonical (strict) $G$-action on $\textbf{\mod}^{\text{Gr}}(\cc)$ is defined by shifting the grading. This $G$-action, although trivial to define, is crucial for our results.\\

The dual category $\cc^*_\cm\op$ of a $G$-graded $\cc$-module category has a canonical faithful $G$-grading associated with the $G$-graded structure. We say that two tensor categories $\cc$ and $\cd$ are \emph{graded Morita equivalent} if there is a $G$-graded $\cc$-module category $\cm$ such that $\cd\simeq \cc^*_\cm\op$  as  $G$-graded tensor categories. Our first main result relates the equivalence of the $2$-category with $G$-action $\textbf{\mod}^{\text{Gr}}(\cc)$ and graded Morita equivalence.

\begin{reptheorem}{graded_Morita_graded_modules}
Two $G$-graded finite tensor categories $\cc$ and $\cd$ are graded Morita equivalent if and only if $\textbf{\mod}^{\text{Gr}}(\cc)$ and $\textbf{\mod}^{\text{Gr}}(\cd)$ are equivalent as $2$-categories with $G$-action.
\end{reptheorem}

On the other hand, given a $G$-graded finite tensor category $\cc$, the equivariant center, denoted by $\cz_G(\cc)$ is a braided $G$-crossed tensor category. Our second main result is an extension of \cite[Theorem 8.12.3]{EGNO} to the graded case.

\begin{reptheorem}{charac_graded_Morita_eq}
Two $G$-graded finite tensor categories $\cc$ and $\cd$ are graded Morita equivalent if and only if $\cz_G(\cc)$ and $\cz_G(\cd)$ are equivalent as braided $G$-crossed categories.
\end{reptheorem}

Theorem \ref{charac_graded_Morita_eq} implies that, concerning Morita equivalence,  the equivariant center plays precisely the same role as the Drinfeld center in the non-graded case. We want to finish this introduction by noticing that the results of this paper open the doors to the generalization of the theory of Lagrangian algebras and Witt group in the equivariant setting. We plan to continue the study of these topics in future work. \\

The structure of the paper is as follows: In Section \ref{sec_2} some preliminary background on the theory of module categories over tensor categories
is displayed, and decomposability of module categories and exact algebras is addressed. In Section \ref{sec_3} we recall the notions and $G$-structures on categories, such as equivariantization and de-equivariantization, which are key in the equivariant setting for a finite group $G$. Then, we define graded Morita equivalence in Section \ref{sec_4} and show its $2$-categorical interpretation by relating it with the $2$-category of graded module categories. Moreover, we revisit the equivariant center and show that two graded Morita equivalent graded tensor categories have equivalent equivariant centers. Finally, in Section \ref{sec_5} we prove that the equivariant Drinfeld center completely characterizes the notion of graded Morita equivalence.

\subsection*{Acknowledgments}  DJ and CS are partially supported by the Deutsche Forschungsgemeinschaft (DFG, German Research Foundation) under Germany's Excellence Strategy - EXC 2121 - "Quantum Universe" 390833306. CG would like to thank the hospitality and excellent working conditions of the Department of Mathematics at the University of Hamburg, where he carried out this research as a Fellow of the Humboldt Foundation. CG is partially supported by the School of Science of Universidad de los Andes, Convocatoria para Finaciación de Programas de Investigación, proyecto  "Condiciones de frontera para TQFT equivariantes". We would also like to thank Viktor Ostrik for helpful correspondence.

\section{Preliminaries}\label{sec_2}
We will only consider linear abelian categories over an algebraically closed field $\K$ of characteristic zero throughout the text. A linear abelian category is finite if it is equivalent to the category of finite dimensional modules over a finite dimensional $\K$-algebra.
\subsection{Tensor categories and the dual tensor category}
We follow the standard definitions considered in \cite{EGNO}. A tensor category $\cc$ is a locally finite rigid monoidal category where the monoidal unit denoted by $\mathbf{1}$ is a simple object and the corresponding tensor product denoted by $\otimes$ is bilinear. $\cc\op$ will denote the tensor category with underlying category $\cc$, but tensor product given by $X \otimes^{\text{op}} Y:=Y \otimes X$ for $X, Y\in\cc$. Without loss of generality, we will often consider strict tensor categories to simplify computations, which is justified by MacLane's coherence theorem \cite{Mac}.\\

Given a tensor category $\cc$, a module category over $\cc$ will be denoted in general by $\cm$ and its module structure by $\otimes:\cc\times\cm\to\cm$, where we suppress coherence data obeying a pentagon axiom. In the case that $\cc$ is finite, we require that $\cm$ is finite as a linear category as well.\\

A $\cc$-module category $\cm$ is called \textit{exact} if for any projective object $P\in\cc$ and any object $M\in\cm$ then the object $P\otimes M$ is projective in $\cm$. A module category is said to be \textit{indecomposable} if it is not equivalent to a direct sum of two non-trivial module categories.\\

For $\cc$-module categories $\cm$ and $\cn$, the category of $\cc$-module functors is denoted by  $\Fun_\cc(\cm,\cn)$, and its full subcategory of right exact $\cc$-module functors by  $\Fun^r_\cc(\cm,\cn)$. If the module category $\cm$ is exact then $\Fun^r_\cc(\cm,\cn)=\Fun_\cc(\cm,\cn)$.\\ 

Given an exact indecomposable $\cc$-module category  $\cm$, the \textit{dual category of $\cc$ with respect to $\cm$} is the tensor category of $\cc$-module endofunctors of $\cm$ with product given by composition and it is denoted by $\cc_\cm^*:=\End_\cc(\cm)$.\\

\subsection{Relative Center}
Given a tensor category $\cc$ and a full tensor subcategory $\cd\subset\cc$, recall, for example from the Section 2B in \cite{GNN}, that the \textit{relative center} of $\cc$ with respect to $\cd$ is a tensor category $\cz_\cd(\cc)$ whose objects are pairs $(X,\gamma)$ where $X\in\cc$ and $\gamma$ is a half-braiding relative to $\cd$, i.e., a natural isomorphism $\gamma_{Y,X}:\,Y\otimes X\xrightarrow{\sim}X\otimes Y$ for $Y\in \cd$, obeying the corresponding hexagon axiom. The morphisms are morphisms in $\cc$ commuting with the associated half-braidings. 
Notice that the Drinfeld center $\cz(\cd)$ is a tensor subcategory of $\cz_\cd(\cc)$. The relative center $\cz_\cd(\cc)$ comes equipped with a \textit{relative braiding} with respect to $\cz(\cd)$, this is a natural isomorphism given for $(X,\gamma)\in\cz(\cd)$ and $(Y,\delta)\in\cz_\cd(\cc)$ by
$$c_{(X,\gamma),(Y,\delta)}:=\delta_{X,Y}:\;X\ot Y\xrightarrow{\;\sim\;} Y\ot X$$
which obeys the corresponding hexagon axioms.
In the case of $\cd=\cc$ we have that $\cz_\cc(\cc)=\cz(\cc)$ is the Drinfeld center of $\cc$ and the relative braiding corresponds to its braided structure.

\begin{Remark}Let $\cc$ be a finite tensor category, then the relative center with respect to a tensor subcategory $\cd$ has the following properties:
\begin{enumerate}[(i)] \label{C_exact_over_center}
    
    \item There is a commutative diagram of forgetful functors.
    \begin{equation*}
    \begin{tikzcd}
    \cz(\cc)\ar[dr,"{(X,\gamma)\mapsto X}",swap]\ar[rr,"{(X,\gamma)\mapsto (X,\gamma|_\cd)}"]&&\cz_\cd(\cc)\ar[dl,"{(X,\gamma)\mapsto X}"]\\
    &\cc&
    \end{tikzcd}
    \end{equation*}
    According to \cite[Proposition 3.39]{EO} the functor $\cz(\cc)\to\cc$ is surjective, thus the forgetful functor $F:\cz_\cd(\cc)\to\cc$ is surjective as well; this means that every $X \in \cc$ is a subquotient of an object of the form $F(Z)$ with $Z\in\cz_\cd(\cc)$.
    
    \item There is a distinguished tensor equivalence $\cz_\cd(\cc)\simeq (\cd\boxtimes\cc\op)_\cc^{*}$.\label{relative_center_dual}
    
    \item The Frobenius-Perron dimension of the relative center is given by $\FP(\cz_\cd(\cc))=\FP(\cd)\,\FP(\cc)$.
    
    \item The category $\cc$ is an exact module category over $\cz_\cd(\cc)$, with module structure induced by the forgetful functor $F:\cz_\cd(\cc)\to\cc$: This follows from (\ref{relative_center_dual}) and \cite[Lemma 7.12.7]{EGNO}.
\end{enumerate}
\end{Remark}

\subsection{Algebras and module categories}
Given associative algebras $A$ and $B$ in a tensor category $\cc$, the category of right $A$-modules in $\cc$ will be denoted by $\mod_A(\cc)$. Similarly, ${}_A\mod(\cc)$ denotes the category of left $A$-modules in $\cc$ and ${}_A\bimod_B(\cc)$ the category of $(A,B)$-bimodules in $\cc$. An algebra $A$ is said to be exact (indecomposable) if the $\cc$-module category $\mod_A(\cc)$ is exact (indecomposable).\\

Module categories over a tensor category $\cc$ can be realized as categories of modules over an algebra internal to $\cc$. In order to study this connection, the notion of \textit{internal Hom} is a useful tool, as presented in \cite[Section 7.9]{EGNO}:\\

Let $\cc$ be a finite tensor category and $\cm$ a $\cc$-module category, then for every $M\in\cm$ the functor $-\ot M:\cc\to\cm$ is exact and thus it has a right adjoint $\innhom_\cm(M,-):\cm\to\cc$. This means that there is a natural isomorphism
\begin{equation}\label{inn_hom_adjunction}
    \Hom_\cm(X\ot M,N)\cong\Hom_\cc(X,\innhom_\cm(M,N))
\end{equation}
for $X\in\cc$ and $M,N\in\cm$. This definition extends to a left exact functor
\begin{equation}\label{inn_hom_functor}
    \innhom_\cm(-,-):\cm\op\times\cm\to\cc,\;(M,N)\mapsto\innhom_\cm(M,N)
\end{equation}
called the \textit{internal Hom} functor of $\cm$. We will also denote the internal Hom simply by $\innhom(-,-)$ in case it is clear which module category $\cm$ are we considering. Additionally, there are canonical natural isomorphisms
\begin{equation}\label{can_inn_hom_iso}
    \innhom(M,X\ot N)\cong X\ot\innhom(M,N),\qquad \innhom(X\ot M, N)\cong \innhom(M,N)\ot X^*
\end{equation}

Denote for $M,N\in\cm$ by
\begin{align}\label{ev_MN}
    \text{ev}_{M,N}:\innhom(M,N)\ot M\to N
\end{align}
the counit of the adjunction (\ref{inn_hom_adjunction}), and for $X\in\cc$ by $\eta_{X,M}:X\to\innhom(M,X\ot M)$ the unit of (\ref{inn_hom_adjunction}). Now for objects $M,N,L\in\cm$ define the multiplication morphism
\begin{equation}\label{inn_hom_multiplication}
    \circ_{M,N,L}:\innhom(N,L)\otimes \innhom(M,N)\to \innhom(M,L)
\end{equation} 
as the image of the following composition
\begin{equation}
    \innhom(N,L)\otimes \innhom(M,N)\otimes M\xrightarrow{\text{id}\otimes\text{ev}_{M,N}}\innhom(N,L)\otimes N
\xrightarrow{\text{ev}_{N,L}}L
\end{equation}
under the adjunction (\ref{inn_hom_adjunction}).

\begin{Remark}\label{inn_hom_algebra}
For objects $M,L\in\cm$
\begin{enumerate}[(i)]
    \item The multiplication $m:=\circ_{M,M,M}$ and unit $u_M:=\eta_{\textbf{1},M}$ provides on $\innhom(M,M)$ the structure of an associative algebra in $\cc$.
    \item Moreover, $\innhom(M,L)$ is a right $\innhom(M,M)$-module with structure morphism given by $\sigma_{M,L}:=\circ_{M,M,L}$.
    \item The functor (\ref{inn_hom_functor}) can be seen as a $\cc$-module functor
    \begin{equation}\label{modulecat_algebra}
        \cm\to\mod_A(\cc),\quad L\mapsto\left(\,\innhom(M,L),\,\sigma_{M,L}\,\right)
    \end{equation}
    where $A:=\innhom(M,M)$ is the algebra described in (i).
\end{enumerate}
\end{Remark}

\begin{Theorem}{\cite[Theorem 3.17]{EO}} \\
Let $\cc$ be a finite tensor category and $\cm$ an indecomposable exact $\cc$-module category, then (\ref{modulecat_algebra}) is an equivalence of $\cc$-module categories $\cm\simeq\mod_A(\cc)$.\label{modulecat_algebra_eq}
\end{Theorem}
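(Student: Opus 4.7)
The plan is to exhibit $F:=\innhom(M,-)$ as half of an adjoint equivalence of $\cc$-module categories. The candidate quasi-inverse is the functor $G:\mod_A(\cc)\to\cm$ sending an $A$-module $(X,\rho)$ to the relative tensor product $X\ot_A M$, defined as the coequalizer
\[
X\ot A\ot M \;\rightrightarrows\; X\ot M \;\longrightarrow\; X\ot_A M,
\]
with the two parallel arrows given by $\rho\ot\mathrm{id}_M$ and $\mathrm{id}_X\ot\mathrm{ev}_{M,M}$. Both $F$ and $G$ are readily seen to be $\cc$-module functors: for $F$ this uses the natural isomorphism $\innhom(M,Y\ot N)\cong Y\ot\innhom(M,N)$ of (\ref{can_inn_hom_iso}), and for $G$ the exactness of $Y\ot-$ on $\cm$, which commutes with the defining coequalizer.

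The adjunction $G\dashv F$ is then immediate: a morphism $G(X,\rho)\to L$ in $\cm$ corresponds by the universal property of the coequalizer to a morphism $X\ot M\to L$ equalising the two composites, which by (\ref{inn_hom_adjunction}) is the same as a morphism $X\to\innhom(M,L)=F(L)$ commuting with the $A$-action, i.e.\ a morphism of $A$-modules $(X,\rho)\to F(L)$. Naturality in both variables is routine.

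To conclude that $F$ is an equivalence I would prove that the unit $\eta:\mathrm{id}\Rightarrow FG$ and counit $\varepsilon:GF\Rightarrow\mathrm{id}$ are isomorphisms. On \emph{free} objects the check is direct: for $Y\in\cc$ one has $G(Y\ot A)\cong Y\ot M$ (the coequalizer degenerates via multiplication in $A$) and $F(Y\ot M)\cong Y\ot A$ by (\ref{can_inn_hom_iso}), and under these identifications $\eta_{Y\ot A}$ and $\varepsilon_{Y\ot M}$ reduce to identities. To pass to arbitrary objects, every $A$-module $(X,\rho)$ fits into a presentation $Y_1\ot A\to Y_0\ot A\to X\to 0$ by free modules, so applying the right-exact functors $G$ and $FG$ and then comparing via the Five Lemma reduces the claim for $\eta$ to the free case. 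Symmetrically, if every $L\in\cm$ admits a presentation $Y_1\ot M\to Y_0\ot M\to L\to 0$, the claim for $\varepsilon$ reduces to its free case.

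The main obstacle is securing these two inputs, and this is where the hypotheses enter decisively. Right-exactness of $F$ does not come for free, since $F$ is \emph{a priori} only a right adjoint and hence only left-exact; one must appeal to exactness of $\cm$ as a $\cc$-module category, which forces $\innhom(M,-)$ to be exact (via the fact that in exact module categories every $-\ot M$ is biadjoint up to the distinguished invertible bimodule structure, so the right adjoint inherits right-exactness). The existence of a presentation $Y_1\ot M\to Y_0\ot M\to L\to 0$ for each $L\in\cm$ uses indecomposability: the full subcategory of $\cm$ generated under quotients and the $\cc$-action by objects of the form $Y\ot M$ is a $\cc$-submodule subcategory of $\cm$ and contains $M$, so indecomposability forces it to coincide with $\cm$. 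Once both inputs are in place, the adjunction $G\dashv F$ upgrades to an adjoint equivalence $\cm\simeq\mod_A(\cc)$ of $\cc$-module categories.
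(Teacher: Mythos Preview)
The paper does not prove this statement; it is quoted from \cite[Theorem~3.17]{EO}. Your outline is the standard one: set up the adjunction $G\dashv F$ with $G=-\otimes_A M$ and $F=\innhom(M,-)$, verify unit and counit on free objects, and extend via presentations. The adjunction and the free-object computations are fine.

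The gap is in how you invoke indecomposability. You assert that the full subcategory of $\cm$ generated under quotients and the $\cc$-action by objects $Y\otimes M$ is a $\cc$-submodule subcategory containing $M$, ``so indecomposability forces it to coincide with~$\cm$.'' But indecomposability only forbids nontrivial \emph{direct-sum} decompositions of $\cm$ as a $\cc$-module category; it says nothing about proper $\cc$-stable subcategories that are merely closed under quotients. What one actually needs is that $F$ is faithful, equivalently that $\cm_0:=\{L:\innhom(M,L)=0\}$ is zero. The subcategory $\cm_0$ is $\cc$-stable and Serre, but to conclude $\cm_0=0$ from indecomposability one must show $\cm_0$ is a $\cc$-module \emph{direct summand}; this is where exactness is used a second time (via the fact that $P\otimes M$ is projective, hence also injective, in $\cm$), and it is a genuine additional argument that your sketch does not supply. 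Once faithfulness of $F$ is in hand, the counit follows from the unit via the triangle identity, since a faithful exact functor reflects isomorphisms.

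Your justification for right-exactness of $F$ is also not quite right: $-\otimes M$ is not ``biadjoint'' and $\innhom(M,-)$ is not a left adjoint. The correct argument is direct: for projective $P\in\cc$ the functor $\Hom_\cc(P,\innhom(M,-))\cong\Hom_\cm(P\otimes M,-)$ is exact because exactness of $\cm$ makes $P\otimes M$ projective; since the projectives $P$ generate $\cc$, this forces $\innhom(M,-)$ to be right-exact.
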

Recall from \cite[Proposition 7.11.1]{EGNO} that there is an equivalence of categories
\begin{equation}\label{EW_equivalence}
    {}_A\bimod_B(\cc)\xrightarrow{\;\,\sim\;\,} \Fun^{\text{r}}_\cc(\mod_A(\cc),\mod_B(\cc)), \quad M\mapsto -\otimes_A M
\end{equation}
called \textit{the Eilenberg-Watts equivalence}. Furthermore for $A=B$ and $\cm:=\mod_A(\cc)$ this is a tensor equivalence ${}_A\bimod_A(\cc)\simeq \End^{\text{r}}_\cc(\cm)\op=\cc_\cm^*\op$, i.e., it is compatible with composition of functors and the tensor product relative to $A$.\\

In view of this, the following well-known result due to Schauenburg shows a relationship between module categories and the Drinfeld center:
\begin{Theorem}{\cite[Theorem 3.3]{Sch}}\\\label{schau_equivalence_th}
Let $A$ be an algebra in a tensor category $\cc$. There is a braided monoidal equivalence 
\begin{align}\label{schau_equivalence}
    S:\cz(\cc) \xrightarrow{\;\,\sim\;\,} \cz({}_A\bimod_A(\cc)),\qquad (X,\gamma_{\_,X})\longmapsto (A\ot X,\delta_{\_,A\ot X})
\end{align}
where for $M\in{}_A\bimod_A(\cc)$, the half-braiding $\delta_{\_,A\otimes X}$ in $\cz({}_A\bimod_A(\cc))$ is defined by the composition
$$M\otimes_AA\otimes X\cong M\otimes X\xrightarrow{\gamma_{M,X}}X\otimes M\cong X\otimes A\otimes_A M\xrightarrow{\gamma^{-1}_{A,X}\otimes_AM} A\otimes X\otimes_AM $$
\end{Theorem}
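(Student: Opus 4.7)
The plan is to build $S$ in four stages: define the underlying functor together with the bimodule structure on $A\otimes X$ and the half-braiding $\delta$, equip it with a monoidal structure, check compatibility with the braidings, and finally argue it is an equivalence.

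First I would endow $A\otimes X$ with the $(A,A)$-bimodule structure whose left action is $m\otimes\mathrm{id}_X$ and whose right action is the composition $(m\otimes\mathrm{id}_X)\circ(\mathrm{id}_A\otimes\gamma_{A,X}^{-1})$. The bimodule axioms follow from associativity of $m$ combined with the hexagon for $\gamma$ applied to $A\otimes A$, and a morphism $f\colon(X,\gamma)\to(Y,\eta)$ is sent to $\mathrm{id}_A\otimes f$, which is automatically compatible with both actions. For an arbitrary bimodule $N$, the prescribed formula for $\delta_{N,A\otimes X}$ requires three checks: that the composition descends from $N\otimes A\otimes X$ to the coequalizer $N\otimes_A(A\otimes X)$, that its target lands in the coequalizer $(A\otimes X)\otimes_A N$ as a bimodule map, and that it is natural in $N$ and satisfies the hexagon for half-braidings in ${}_A\bimod_A(\cc)$. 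Unfolding the coequalizer presentation of $\otimes_A$, each of these reduces to a naturality or hexagon identity for $\gamma$ in $\cz(\cc)$.

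Next I would promote $S$ to a monoidal functor via the canonical isomorphism $(A\otimes X)\otimes_A(A\otimes Y)\xrightarrow{\;\sim\;}A\otimes X\otimes Y$ coming from the multiplication of $A$; the tensor coherence diagrams collapse to standard associativity diagrams in $\cc$, and the identification of $\delta^X\otimes_A\delta^Y$ with $\delta^{X\otimes Y}$ under this isomorphism is the hexagon for the product half-braiding on $\cz(\cc)$. For the braided structure, recall that $c_{(X,\gamma),(Y,\eta)}=\eta_{X,Y}$ in $\cz(\cc)$, whereas the braiding in $\cz({}_A\bimod_A(\cc))$ on $S(X,\gamma)\otimes_A S(Y,\eta)$ is by definition $\delta^{Y}_{S(X)}$; plugging the explicit formula for $\delta$ into the canonical isomorphism identifies this morphism with $\mathrm{id}_A\otimes\eta_{X,Y}$, which is exactly $S$ applied to $c_{(X,\gamma),(Y,\eta)}$.

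The heart of the proof is showing that $S$ is an equivalence. My plan here is to construct a quasi-inverse $T\colon\cz({}_A\bimod_A(\cc))\to\cz(\cc)$: given $(M,\delta)$, evaluating $\delta$ on free bimodules of the form $A\otimes Z\otimes A$ with $Z\in\cc$ produces a natural family of isomorphisms in $\cc$ that assembles into a candidate half-braiding on the underlying object of $M$; one then proves that $M$ is necessarily an induced bimodule, i.e.\ that $M\cong A\otimes X$ as a bimodule for $X\in\cc$ the resulting central object. The principal obstacle is exactly this essential-surjectivity step: extracting a central object of $\cc$ from an abstract central bimodule and recognizing $M$ as induced from it. Once this is done, full faithfulness is a formal consequence of the fact that any bimodule map $A\otimes X\to A\otimes Y$ commuting with the $\delta$'s is rigidified by the unit $u\colon\mathbf{1}\to A$ into a morphism $X\to Y$ in $\cc$ intertwining the half-braidings, and the hexagons for $\delta$ transport to the hexagons for $\gamma$, closing the argument.
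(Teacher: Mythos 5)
The paper itself offers no proof of this statement: it is imported verbatim from Schauenburg \cite[Theorem 3.3]{Sch}, so there is no internal argument to compare with and your attempt must be judged on its own. The first half of your plan is sound: the $\gamma$-twisted bimodule structure on $A\ot X$, the check that the stated $\delta$ descends to the coequalizers defining $\ot_A$ and satisfies naturality and the hexagon, the monoidal structure via $(A\ot X)\ot_A(A\ot Y)\cong A\ot X\ot Y$, and the identification of the braidings are indeed routine consequences of naturality and the hexagon for $\gamma$ (with the standing hypothesis, needed in Schauenburg's generality, that the relevant coequalizers exist and are preserved by $\ot$; in the paper's finite tensor category setting this is automatic).

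The genuine gap is in the part that carries essentially all the content of the theorem, namely that $S$ is an equivalence. Your description of the quasi-inverse is not just unspecified but, as literally stated, cannot work: you propose to read off from $\delta$ evaluated at free bimodules $A\ot Z\ot A$ a half-braiding \emph{on the underlying object of $M$} and then to take $X$ to be ``the resulting central object'' with $M\cong A\ot X$. If $X$ were the underlying object of $M$, this would assert $M\cong A\ot M$ as bimodules, which fails whenever $A\neq\mathbf{1}$ (already in $\cc=\mathrm{Vec}$ by counting dimensions); and for an object in the image, the underlying object $A\ot X$ of $S(X,\gamma)$ carries no half-braiding in $\cc$, since $A$ need not be central. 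The real work is to manufacture $X$ out of $(M,\delta)$ --- for instance, in the abelian setting, as an equalizer of two canonical morphisms $M\rightrightarrows M\ot A$ built from the unit of $A$ and from $\delta$ evaluated at the free bimodule $A\ot A$, i.e.\ from $A\ot M\to M\ot A$ --- then to endow $X$ with a half-braiding using naturality of $\delta$ at free bimodules, and to prove that the induced bimodule map $A\ot X\to M$ is an isomorphism compatible with the half-braidings; none of this appears in your sketch, and you yourself flag it as the ``principal obstacle'' without resolving it. Full faithfulness has the same flavour: by freeness a bimodule map $A\ot X\to A\ot Y$ corresponds to a morphism $X\to A\ot Y$ in $\cc$, and one must use compatibility with the $\delta$'s to show it factors through $u\ot\mathrm{id}_Y$ --- a nontrivial centrality argument, not a formal consequence. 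As it stands, your proposal establishes the (comparatively easy) existence of the braided monoidal functor $S$, but not the theorem.
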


\subsection{Decomposability of module categories and exact algebras}
Given a decomposition of a module category, then the internal Hom also decomposes.
\begin{Proposition}\label{decomposition_inn_hom}
Let $\cm$ be a $\cc$-module category decomposed into $\cc$-submodule categories as $\cm=\bigoplus_{i\in I}\cm_i$. Then for $M,N\in\cm$
\begin{equation}\label{inn_hom_decomp_object}
    \innhom_\cm(M,N)=\bigoplus_{i\in I}\,\innhom_{\cm_i}(M_i,N_i)
\end{equation}
as objects in $\cc$, where $M_i$ and $N_i$ are the components of $M$ and $N$ under the decomposition of $\cm$. Decomposition (\ref{inn_hom_decomp_object}) yields to a decomposition of the algebra $\innhom_\cm(M,M)$ from Remark \ref{inn_hom_algebra}
\begin{equation}\label{inn_hom_decomp_algebra}
    \innhom_\cm(M,M)=\prod_{i\in I}\,\innhom_{\cm_i}(M_i,M_i)
\end{equation}
as algebras in $\cc$.
\begin{proof}
Given $X\in\cc$, since each $\cm_i$ is a submodule category then $\Hom_\cm(X\otimes M,\,N)=\bigoplus_{i\in I}\,\Hom_{\cm_i}(X\otimes M_i,\,N_i)$ and thus
\begin{align*}
    \Hom_\cc(X,\,\innhom_\cm(M,N))&\cong\bigoplus_{i\in I}\,\Hom_{\cc}(X,\,\innhom_{\cm_i}(M_i,N_i))\\
    &\cong\Hom_{\cc}\left(X,\;\bigoplus_{i\in I}\,\innhom_{\cm_i}(M_i,N_i)\right)
\end{align*}
it follows by the Yoneda Lemma  that (\ref{inn_hom_decomp_object}) holds. Now for every $M, N\in \cm$ we have then that the identity morphism $\text{id}_{\innhom_{\cm}(M,N)}=\bigoplus_{i\in I}\text{id}_{\innhom_{\cm_i}(M_i,N_i)}$
and thus its image under the bijection (\ref{inn_hom_adjunction}) is described by the evaluation morphisms given in (\ref{ev_MN}) for the submodule categories $\cm_i$:
\begin{align*}
    \text{ev}^\cm_{M,N}=\bigoplus_{i\in I}\,\text{ev}^{\cm_i}_{M_i,N_i} \circ p_{i}
\end{align*}
where $p_i:\innhom_\cm(M,N)\ot M\to\bigoplus_{i\in I}\,\innhom_{\cm_i}(M_i,N_i)\ot M_i$ denotes the canonical projection. It follows for the multiplication morphism (\ref{inn_hom_multiplication}) that
\begin{align*}
    \circ^{\cm}_{M,N,L}=\bigoplus_{i\in I}\;\circ^{\cm_i}_{M_i,N_i,L_i}\circ p'_{i}
\end{align*}
where $p'_{i}$ is the projection $$\innhom_{\cm}(N,L)\otimes \innhom_{\cm}(M,N)\to\bigoplus_{i\in I}\,\innhom_{\cm_i}(N_i,L_i)\otimes \innhom_{\cm_i}(M_i,N_i)$$
Similarly, for $X\in\cc$ and $M\in\cm$, the identity morphism $\text{id}_{X\otimes M}=\bigoplus_{i\in I}\,\text{id}_{X\ot M_i}$
and therefore the counit of the adjunction (\ref{inn_hom_adjunction}) is given by $\eta^\cm_{X,M}=\prod_{i\in I}\eta^{\cm_i}_{X,M_i}$ where $\prod$ denotes the morphism granted by the universal property of the product $\bigoplus_{i\in I}\,\innhom_{\cm_i}(M_i,X\otimes M_i)$. This description of the multiplication and counit morphisms of $\cm$ in terms of the multiplication and counit morphisms of the submodule categories $\cm_i$, show that the algebra $\innhom_{\cm}(M,M)$ is the product of the algebras $\innhom_{\cm_i}(M_i,M_i)$ in $\cc$.
\end{proof}
\end{Proposition}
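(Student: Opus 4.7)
My plan is to derive both statements from the defining adjunction (\ref{inn_hom_adjunction}) together with the Yoneda lemma, tracking how the counit and unit transform under the resulting isomorphism.

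First, to establish (\ref{inn_hom_decomp_object}), I would fix $X\in\cc$ and exploit that $\cm=\bigoplus_{i\in I}\cm_i$ is a decomposition as $\cc$-module categories. Then $X\ot M$ decomposes componentwise as $\bigoplus_i X\ot M_i$, and there are no cross morphisms between distinct summands, so
$$\Hom_\cm(X\ot M,N)\;\cong\;\bigoplus_{i\in I}\Hom_{\cm_i}(X\ot M_i,N_i).$$
Applying the adjunction (\ref{inn_hom_adjunction}) inside each $\cm_i$, and then pulling the direct sum out of $\Hom_\cc(X,-)$ by additivity in the second argument, I obtain a natural isomorphism $\Hom_\cc(X,\innhom_\cm(M,N))\cong\Hom_\cc\bigl(X,\bigoplus_i\innhom_{\cm_i}(M_i,N_i)\bigr)$. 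The Yoneda lemma then yields (\ref{inn_hom_decomp_object}).

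Next I would identify the counit and unit under this isomorphism. The image of the identity $\mathrm{id}_{\innhom_\cm(M,N)}$ under the adjunction is $\text{ev}^\cm_{M,N}$; following it through the Yoneda chain above shows that the identity corresponds to the direct sum of componentwise identities, and transporting back gives $\text{ev}^\cm_{M,N}=\bigoplus_i\text{ev}^{\cm_i}_{M_i,N_i}\circ p_i$ with $p_i$ the canonical projection. A symmetric argument, using that $\mathrm{id}_{X\ot M}=\bigoplus_i\mathrm{id}_{X\ot M_i}$, identifies the unit $\eta^\cm_{X,M}$ with the map into the direct sum whose $i$-th component is $\eta^{\cm_i}_{X,M_i}$, i.e.\ the universal morphism into the product $\bigoplus_i\innhom_{\cm_i}(M_i,X\ot M_i)$.

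Finally, substituting these componentwise formulas for the evaluations into the definition (\ref{inn_hom_multiplication}) of the multiplication, together with associativity of $\ot$ and the fact that $p_i$ is a projection, forces $\circ^\cm_{M,N,L}$ to kill all cross terms and act componentwise as $\bigoplus_i\circ^{\cm_i}_{M_i,N_i,L_i}\circ p'_i$. The unit $u_M=\eta^\cm_{\mathbf{1},M}$ is likewise the diagonal. Since a finite direct sum is also a product in our abelian category, this identifies the algebra $\innhom_\cm(M,M)$ with the product algebra $\prod_{i\in I}\innhom_{\cm_i}(M_i,M_i)$, proving (\ref{inn_hom_decomp_algebra}). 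The main subtlety is the bookkeeping in this last step: the underlying object and the product object are canonically the same, but the algebra is genuinely the product algebra (with vanishing cross products) only because the evaluation has been shown to factor through the projections $p_i$; this is what prevents off-diagonal contributions to the multiplication.
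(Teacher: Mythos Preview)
Your proposal is correct and follows essentially the same route as the paper: establish (\ref{inn_hom_decomp_object}) via the adjunction and Yoneda, then track the counit $\text{ev}$ and unit $\eta$ componentwise through this identification, and finally deduce that the multiplication (\ref{inn_hom_multiplication}) and unit $u_M$ are diagonal, yielding the product algebra. The only difference is cosmetic ordering (you handle the unit before the multiplication, the paper does the reverse) and your closing remark on why off-diagonal terms vanish, which the paper leaves implicit.
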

The following statement dealing with the decomposition of exact algebras is well-known among experts, but we include a proof for completeness.
\begin{Proposition}\label{algebra_decomposition}
Let $L$ be an exact algebra in a finite tensor category $\cc$.
\begin{enumerate}[(i)]
    \item There is an algebra decomposition $L=\prod_{i\in I}\, L_i$, where $L_i$ are exact indecomposable algebras in $\cc$. 
    \item The $\cc$-module category of $L$-modules in $\cc$ can be decomposed as $$\mod_L(\cc)\simeq \bigoplus_{i\in I}\,\mod_{L_i}(\cc)$$
    \item The multi-tensor category of $L$-bimodules in $\cc$ decomposes as
    \begin{align*}
        {}_{L}\bimod_{L}(\cc)\simeq\bigoplus_{i,j\in I} {}_{L_i}\bimod_{L_j}(\cc)
    \end{align*}
\end{enumerate}
\end{Proposition}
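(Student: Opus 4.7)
The plan is to realize $L$ as the internal endomorphism algebra of itself inside $\mod_L(\cc)$, and then apply Proposition \ref{decomposition_inn_hom} to a direct sum decomposition of this module category.

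First, since $L$ is exact, $\mod_L(\cc)$ is an exact $\cc$-module category, and as a finite linear category it will decompose into finitely many indecomposable exact $\cc$-submodule categories $\mod_L(\cc) = \bigoplus_{i\in I} \cm_i$. I would then view the free rank-one module $L \in \mod_L(\cc)$ under this decomposition as $L = \bigoplus_i L_i$ with $L_i \in \cm_i$, discarding any indices with $L_i = 0$. Since $L$ is a projective generator of $\mod_L(\cc)$, each $L_i$ will be a nonzero generator of $\cm_i$, so Theorem \ref{modulecat_algebra_eq} applied in each $\cm_i$ yields a $\cc$-module equivalence
$$\cm_i \simeq \mod_{A_i}(\cc), \qquad A_i := \innhom_{\cm_i}(L_i, L_i),$$
with each $A_i$ exact (as $\cm_i$ is exact) and indecomposable (as $\cm_i$ is indecomposable). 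This will establish (ii).

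For (i), a short computation using the adjunction (\ref{inn_hom_adjunction}) and the Yoneda lemma shows that $\innhom_{\mod_L(\cc)}(L, L) \cong L$ as algebras in $\cc$. Applying Proposition \ref{decomposition_inn_hom} will then give
$$L \cong \innhom_{\mod_L(\cc)}(L, L) = \prod_{i \in I} \innhom_{\cm_i}(L_i, L_i) = \prod_{i \in I} A_i$$
as algebras in $\cc$, which is (i). For (iii), I would chain the Eilenberg-Watts equivalence (\ref{EW_equivalence}) with the decomposition of $\mod_L(\cc)$:
$${}_L \bimod_L(\cc) \simeq \Fun^{\text{r}}_\cc\bigl(\mod_L(\cc), \mod_L(\cc)\bigr) \simeq \bigoplus_{i,j \in I} \Fun^{\text{r}}_\cc\bigl(\mod_{A_i}(\cc), \mod_{A_j}(\cc)\bigr) \simeq \bigoplus_{i,j \in I} {}_{A_i}\bimod_{A_j}(\cc),$$
where the middle step uses that a right exact $\cc$-module functor out of a direct sum of module categories is determined by its restrictions to pairs of summands, and the final step is Eilenberg-Watts once more.

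The main obstacle is largely bookkeeping. The delicate point is the identification $L \cong \innhom_{\mod_L(\cc)}(L,L)$ as algebras: one needs to verify that the multiplication defined via (\ref{inn_hom_multiplication}) coincides with the original multiplication on $L$, so that the product decomposition extracted from Proposition \ref{decomposition_inn_hom} genuinely decomposes the algebra $L$ and not merely its underlying object in $\cc$.
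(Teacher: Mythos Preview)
Your proposal is correct and follows essentially the same approach as the paper's proof: decompose $\mod_L(\cc)$ into indecomposable exact submodule categories, realize each summand as modules over the internal endomorphism algebra of the corresponding component of $L$, then invoke Proposition~\ref{decomposition_inn_hom} for the algebra decomposition and Eilenberg--Watts for the bimodule decomposition. The paper justifies the algebra identification $\innhom_{\mod_L(\cc)}(L,L)\cong L$ via the explicit formula $\innhom(M,N)=(M\otimes_L{}^*N)^*$ from \cite[Example 7.9.8]{EGNO}, whereas you appeal directly to the adjunction and Yoneda; either route works, and your flagging of this point as the one requiring care is apt.
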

\begin{proof}
Since $L$ is exact, according to \cite[Proposition 7.6.7]{EGNO} there is a decomposition of the form
$$\cm:=\mod_L(\cc)=\bigoplus_{i\in I}\,\cm_i$$
where $\cm_i$ is an exact indecomposable $\cc$-submodule category of $\mod_L(\cc)$ for each $i\in I$. In particular the regular module can be decomposed as $L=\bigoplus_{i\in I}\, M_i$. It follows from Remark \ref{inn_hom_algebra} and Theorem \ref{modulecat_algebra_eq} that  $L_i:=\innhom_{\cm_i}(M_i,M_i)$ is an algebra in $\cc$ and there is an equivalence of $\cc$-module categories
\begin{align*}
    \innhom_{\cm_i}(M_i,-):\,\cm_i\xrightarrow{\;\sim\;}\mod_{L_i}(\cc)
\end{align*}
for each $i\in I$, and thus the algebras $L_i$ are exact and indecomposable. These equivalences lead to an equivalence of $\cc$-module categories
\begin{align*}
    \innhom_\cm(L,-)=\bigoplus_{i\in I}\,\innhom_{\cm_i}(M_i,-):\mod_L(\cc)\xrightarrow{\;\sim\;}\bigoplus_{i\in I}\,\mod_{L_i}(\cc)
\end{align*}
which shows (ii). Now recall from \cite[Example 7.9.8]{EGNO} that the internal Hom  is given by $\innhom_\cm(M,N)=(M\otimes_L{}^*N)^*$ for $M,N\in\cm$. In particular one can verify that $\innhom_\cm(L,L)=L$ as algebras in $\cc$. It follows from Proposition \ref{decomposition_inn_hom} that $$L=\innhom_\cm(L,L)=\prod_{i\in I}\,L_i$$ as algebras in $\cc$, and thus statement (i) holds. Lastly, from the Eilenberg-Watts equivalence (\ref{EW_equivalence}) follows
\begin{align*}
        {}_{L}\bimod_{L}(\cc)\simeq \End_\cc\left(\bigoplus_{i\in I}\,\mod_{L_i}(\cc)\right)\op &\simeq \bigoplus_{i,j\in I}\,\Fun_\cc\left(\mod_{L_i}(\cc),\mod_{L_j}(\cc)\right)\\
        &\simeq \bigoplus_{i,j\in I} {}_{L_i}\bimod_{L_j}(\cc)
\end{align*}
providing the desired decomposition in (iii).
\end{proof}

\section{Equivariant setting}\label{sec_3}
This section revisits some notions and $G$-structures on categories for a finite group $G$.  The content is well-known to experts, but we would like to highlight Remark \ref{central_ext_braided_crossed}.
\subsection{Group actions on tensor categories and equivariantization}
Let $G$ be a finite group and denote  by $\underline{G}$ the strict monoidal category whose objects are elements in $G$, all morphisms are identities and the tensor product is given by the group law. Given a monoidal category $\cc$ denote by  $\text{Aut}_\otimes(\cc)$ the strict monoidal category of tensor autoequivalences of $\cc$ and morphisms monoidal natural isomorphisms. A monoidal \textit{$G$-action} on $\cc$ is the datum of a monoidal functor $T:\underline{G}\rightarrow \text{Aut}_\otimes(\cc)$. We  refer to the pair $(\cc,T)$ as a \textit{monoidal $G$-category}.\\

Let $(\cc,T)$ be a monoidal $G$-category. A \textit{$G$-equivariant object} is an object $X\in\cc$ together with a choice of isomorphisms $\{u_g:T_g(X)\xrightarrow{\sim}X\}_{g\in G}$, fulfilling a compatibility condition with the tensor structure of the $G$-action functor $T$ (see \cite[Definition 2.6]{GNN}). The category of equivariant objects is denoted by $\cc^G$ and is called the \textit{equivariantization of $(\cc,T)$}. The equivariantization $\cc^G$ inherits a monoidal structure from $\cc$.

\subsection{Graded tensor categories}
Let $G$ be a finite group and $\cc$ a tensor category. A \textit{$G$-grading} on $\cc$ consists of a decomposition 
$$\cc=\Gsum \,\cc_g$$ 
into a direct sum of full abelian subcategories, such that for $g,h\in G$ the tensor product restricts to $\otimes:\cc_g\times \cc_h\to\cc_{gh}$. If $\cc_g\neq 0$ for all $g\in G$ the $G$-grading is called \textit{faithful}. In this case we also say that $\cc$ is a \textit{$G$-extension} of the trivial component $\cc_e$ and it holds that $\FP(\cc)=|G|\,\FP(\cc_e)$.

\subsection{Braided \texorpdfstring{$G$}{G}-crossed tensor categories and central \texorpdfstring{$G$}{G}-extensions}
The following notion is an analog to the notion of braided tensor category in the equivariant setting and was introduced in \cite{Tu}.
\begin{Definition}
(Braided $G$-crossed tensor category)\\
A \textit{braided} $G$-\textit{crossed tensor category} is a tensor category $\cb$ equipped with,
\begin{itemize}
    \item A $G$-grading $\cb= \Gsum \cb_g$.
    \item A compatible monoidal $G$-action $g\mapsto T_g$, i.e., $T_g(\cb_h)\subset \cb_{ghg^{-1}}$ for all $g,h\in G$.
    \item A $G$-\textit{braiding} which consists of a natural collection of isomorphisms,
    \begin{align*}
        c_{X,Y}:X\otimes Y\xrightarrow{\,\sim\,} T_g(Y)\otimes X,\qquad X\in \cb_g, \;Y\in \cb
    \end{align*}
\end{itemize}
which satisfy certain compatibility conditions with the tensor structure $\eta_{g,h}:T_g\circ T_h\xRightarrow{\sim} T_{gh}$ of the $G$-action $T$ and the tensor structure of $T_g$. A complete definition can be found in \cite[Definition 2.10]{GNN}.
\end{Definition}
\label{braided_G_crossed}
The equivariantization $\cb^G$ of any braided $G$-crossed tensor category $\cb$, inherits the structure of a braided tensor category, as explained in \cite{Mue}:
Consider $(X,\{u_g\}_{g\in G})$ and $(Y,\{v_g\}_{g\in G})$ objects in $\cb^G$ with $X\in\cb_h$, then the braiding is given by extending additively the following composition 
\begin{align}\label{equivariant_braiding}
\Tilde{c}_{X,Y}:X\otimes Y\xrightarrow{c_{X,Y}}T_h(Y)\otimes X \xrightarrow{v_h\otimes \text{id}_{X}}Y\otimes X
\end{align}
where $c_{X,Y}$ is the $G$-braiding of $\cb$. As an additional structure $\cb^G$ contains a Tannakian subcategory $\text{Rep}(G)$, i.e.,  there is a braided fully faithful functor
\begin{align}\label{incl_equi}
    \text{Rep}(G)=\text{Vec}^G\to \cb^G
\end{align}
whose essential image is given by the possible equivariant structures on objects which are multiples of the monoidal unit of $\cb$.  

\begin{Definition}
Let $\ca$ be a braided tensor category. A \textit{central $G$-extension of $\ca$} is a $G$-extension $\cb$ of $\ca$ together with a braided tensor functor $\iota:\ca\to\cz(\cb)$ such that its composition with the forgetful functor $\cz(\cb)\to\cb$ coincides with the inclusion $\ca\to\cb$. We also just say that $\cb$ is a \textit{central $G$-extension}. 
\end{Definition}
Notice that given a central $G$-extension $\cb$ of $\ca$, the datum of the braided tensor functor $\iota:\ca\to\cz(\cb)$ is the same as a \textit{relative braiding} on $\cb$ with respect to $\ca$, i.e., a natural isomorphism
$$c_{A,B}:\,A\ot B\xrightarrow{\;\sim\;}B\ot A,\qquad \text{for}\quad A\in\ca,\;B\in\cb$$
fulfilling the hexagon axioms of a braiding.

\begin{Remark}\label{central_ext_braided_crossed}
To any braided $G$-crossed tensor category $\cb$ a central $G$-extension of $\cb_e$ is associated up to equivalence, and vice versa. This is described in \cite[Proposition 8.11]{DN} as a $2$-equivalence between the $2$-groupoids of central $G$-extensions and braided $G$-crossed tensor categories. We will therefore use both notions indistinctly.
\end{Remark}

\subsection{De-equivariantization}
The inverse construction to the equivariantization of a braided $G$-crossed tensor category is known as de-equivariantization, as formulated in detail in \cite[Section 4.4]{DGNO}.\\

Let $\cd$ be a braided tensor category together with the additional datum of a braided fully faithful functor $\text{Rep}(G)\to\cd$. The group $G$ acts by left translations on the set of functions $\text{Fun}(G,\K)$ turning it into an object in $\text{Rep}(G)$. Furthermore, it has a canonical structure of a commutative separable Frobenius algebra in $\text{Rep}(G)$. This algebra is called \textit{the regular algebra of functions}; denote by $A$ its image in $\cd$ under the functor $\text{Rep}(G)\to\cd$.
The \textit{de-equivariantization of $\cd$} is a braided $G$-crossed tensor category $\cd_G$, whose underlying category is the category of modules ${}_A\mod(\cd)$ with tensor product $\otimes_A$.\\

The processes of equivariantization and de-equivariantization are mutual inverses providing a $2$-equivalence between the $2$-categories of braided $G$-crossed fusion categories and of braided fusion categories containing $\text{Rep}(G)$. As it is mentioned at the start of \cite[Section 4.4]{DGNO} these constructions and results hold in the non-semisimple case as well, leading in particular to the following statements (for a proof without the semisimplicity assumption we also refer to \cite[Section 3.4]{J}): 

\begin{enumerate}[(i)]
    \item \label{eq_deeq_equivalence}
For every braided $G$-crossed tensor category $\cb$, there is an  equivalence $\cb\xrightarrow{\,\sim\,}(\cb^G)_G$ of braided $G$-crossed tensor categories.
    \item \label{eq_deeq_equivalence2}
Given a braided tensor category $\cd$ together with a braided fully faithful functor $\Gamma:\;\text{Rep}(G)\to\cd$, there exists a braided equivalence $\cd\xrightarrow{\,\sim\,}(\cd_G)^G$, which commutes with $\Gamma$ and the braided functor (\ref{incl_equi}) coming from the equivariantization process.

\end{enumerate}

\section{Graded Morita equivalence}\label{sec_4}
From now on we will only consider faithful gradings.
\subsection{Graded module categories}
\begin{Definition}
Let $\cc$ be a $G$-graded tensor category.
\begin{enumerate}[(i)]
    \item A \textit{$G$-graded module category over $\cc$} is a $\cc$-module category with a decomposition 
    $$\cm=\Gsum \cm_g$$ 
    into a direct sum of full subcategories, where $\cm_g\neq0$ for every $g\in G$, and such that for $g,h\in G$ the tensor action restricts to $\otimes:\cc_g\times \cm_h\to\cm_{gh}$.
    \item A $G$-graded $\cc$-module category $\cm$ is called \textit{indecomposable} if it is not equivalent to a non-trivial direct sum of  $G$-graded module categories, and is called \textit{exact} if it is exact as a $\cc$-module category.
\end{enumerate}
\end{Definition}
The following propositions regarding graded module categories seem to be well-known but are difficult to find in the literature; hence, they are spelled out.
\begin{Proposition}\label{graded_innhom}
Let $\cc$ be a $G$-graded finite tensor category and $\cm$ a $G$-graded $\cc$-module category. For $M\in \cm_g$ and $N\in \cm_h$, then $\innhom(M,N)\in\cc_{hg^{-1}}$.
\begin{proof}
It follows from the definition of the internal Hom considering that $\cc$ is graded.
\end{proof}
\end{Proposition}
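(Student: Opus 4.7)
The plan is to read off the graded component directly from the defining adjunction of the internal Hom together with the orthogonality properties of a graded decomposition.

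First I would decompose $\innhom(M,N) = \bigoplus_{k\in G} Y_k$ with $Y_k\in\cc_k$ using the $G$-grading on $\cc$. The goal then becomes showing $Y_k=0$ whenever $k\neq hg^{-1}$. Since each $\cc_k$ is a full subcategory and the decomposition is a direct sum of such, it suffices by Yoneda (applied inside $\cc_k$) to show that $\Hom_\cc(X,Y_k)=0$ for every $X\in\cc_k$ with $k\neq hg^{-1}$.

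The key step is to rewrite this Hom via the adjunction (\ref{inn_hom_adjunction}):
\begin{equation*}
\Hom_\cc(X,\innhom(M,N))\;\cong\;\Hom_\cm(X\otimes M,N).
\end{equation*}
If $X\in\cc_k$ and $M\in\cm_g$, then the grading compatibility of the module structure forces $X\otimes M\in\cm_{kg}$. On the other hand $N\in\cm_h$, and since $\cm=\bigoplus_{l\in G}\cm_l$ is a direct sum of full subcategories, the Hom between objects in different graded components vanishes. Hence $\Hom_\cm(X\otimes M,N)=0$ unless $kg=h$, i.e.\ unless $k=hg^{-1}$. Using once more that $\Hom_\cc(X,Y_l)=0$ for $X\in\cc_k$ and $l\neq k$, we conclude that $\Hom_\cc(X,Y_k)\cong\Hom_\cm(X\otimes M,N)=0$ for all $X\in\cc_k$ whenever $k\neq hg^{-1}$, and therefore $Y_k=0$ for those $k$.

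This leaves $\innhom(M,N)=Y_{hg^{-1}}\in\cc_{hg^{-1}}$, as claimed. There is no real obstacle: the only mild point to keep in mind is that one must argue via the representable functor $\Hom_\cc(-,Y_k)$ (rather than trying to identify $Y_k$ from a single test object), but this is immediate from Yoneda together with the fact that a graded component $\cc_k$ is itself closed under direct sums and contains enough objects to detect vanishing.
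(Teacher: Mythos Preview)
Your argument is correct and is exactly the intended expansion of the paper's one-line proof: the paper simply says the claim follows from the defining adjunction of the internal Hom together with the grading, and you have spelled out that computation in full.
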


\begin{Proposition}\label{modules_graded}
Let $\cc$ be a $G$-graded tensor category and let $A$ be an algebra in the trivial component $\cc_e$.
\begin{enumerate}[(i)]
    \item $\mod_A(\cc)$ is a $G$-graded $\cc$-module category with decomposition 
    $$\mod_A(\cc)=\Gsum \mod_A(\cc_g)$$
    where $\mod_A(\cc_g)$ is the subcategory of $A$-modules with underlying object in $\cc_g$.
    \item The category ${}_A\bimod_A(\cc)$ is $G$-graded with decomposition
    $${}_A\bimod_A(\cc)=\bigoplus_{g\in G}{}_A\bimod_A(\cc_g)$$
    where ${}_A\bimod_A(\cc_g)$ is the subcategory of bimodules with underlying object in $\cc_g$, and this decomposition is compatible with $\otimes_A$.
    \item Let $\cb$ be a central $G$-extension and $A$ be an exact commutative algebra in the trivial component $\cb_e$. Then $\mod_A(\cb)$ has the structure of a $G$-graded tensor category.
\end{enumerate}

\begin{proof}\hfill 

\begin{enumerate}[(i)]
\item 
Consider a collection of right $A$-modules $\{(M_g,r_g:M_g\otimes A\to M_g)\}_{g\in G}$ with $M_g\in \cc_g$, and let $M:=\Gsum M_g$, then the morphism
    \begin{align*}
    M\otimes A&=\Gsum M_g\otimes A\xrightarrow{\oplus_{g\in G}r_{g}}\Gsum M_g=M
    \end{align*}
provides a right $A$-module structure on $M$.

Conversely given a module $(M,r:M\otimes A\to M)\in \mod_A(\cc)$, since $\cc$ is graded there is a decomposition of the underlying object $M=\Gsum M_g$ in $\cc$. For each $g\in G$ one can check that the object $M_g$ acquires a right $A$-module structure via the morphism
    \begin{align*}
    r_g:M_g\otimes A\xrightarrow{\iota_g\otimes \text{id}_A}M\otimes A\xrightarrow{r}M\xrightarrow{p_g}M_g
    \end{align*}
this determines a decomposition of $(M,r)=\Gsum (M_g,r_g)$ in $\mod_A(\cc)$.

The decomposition above is compatible with the module category structure of $\mod_A(\cc)$: Given an object $X\in \cc_g$ and a module $(M,r)\in \mod_A(\cc_h)$, then $X\otimes M\in \cc_{gh}$ meaning that $(X\otimes M, \text{id}_X\otimes r)\in \mod_A(\cc_{gh})$.\\

\item The decomposition of bimodules follows in complete analogy to (i).
It remains to verify that this $G$-decomposition is compatible with $\otimes_A$:\\
Let $(M,r_M,q_M)\in{}_A\bimod_A(\cc_g)$ and $(N,r_N,q_N)\in{}_A\bimod_A(\cc_h)$, and consider the coequalizer diagram
\begin{center}
\begin{tikzcd}
 M\ot A\ot N\ar[rr,shift right=.75ex,"\text{id}\otimes q_N",swap]\ar[rr,shift left=.75ex,"r_M\otimes \text{id}"]&&M\ot N\ar[r]& M\otimes_A N
\end{tikzcd}
\end{center}
If $M\otimes_A N$ were not of degree $gh$, then the coequalizer morphism would be a zero morphism, but since it is epic, then $M\otimes_A N$ would be a zero object.

\item  Given a module $(M,r)\in\mod_A(\cb)$ the following composition defines a compatible left $A$-action on $M$
\begin{align*}
    A\ot M\xrightarrow{ c_{A,M}}M\ot A\xrightarrow{r}M
\end{align*}
where $c_{A,M}$ is the relative braiding of $\cb$. This construction induces a fully faithful functor $\mod_A(\cb)\to{}_A\bimod_A(\cb)$ and $\mod_A(\cb)$ is closed under the tensor product $\otimes_A$ in ${}_A\bimod_A(\cb)$ providing the tensor structure on $\mod_A(\cb)$.
\end{enumerate}
\end{proof}
\end{Proposition}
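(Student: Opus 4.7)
My overall strategy is to leverage the simple observation that if $\cc=\bigoplus_{g\in G}\cc_g$ is a direct sum decomposition of abelian categories, then for any algebra $A$ concentrated in the trivial component $\cc_e$, tensoring with $A$ on either side preserves each component $\cc_g$. In particular, the underlying $\cc$-structural morphisms defining modules, bimodules, and relative tensor products are all morphisms between objects sitting inside a single graded component, so the decomposition of objects in $\cc$ automatically refines to a decomposition in the module/bimodule categories.

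For part (i), given a right $A$-module $(M,r:M\otimes A\to M)$, I would decompose the underlying object as $M=\bigoplus_g M_g$ in $\cc$ and define $r_g$ as the composition of the inclusion $M_g\otimes A\hookrightarrow M\otimes A$ with $r$ and the projection onto $M_g$; the fact that $A\in\cc_e$ means $M_g\otimes A\in\cc_g$, so this projection is the natural one and the module axioms are inherited componentwise. Conversely, a direct sum of modules is again a module in the obvious way, and morphisms of modules respect the grading because any morphism in $\cc$ between objects in different components vanishes. Compatibility with the $\cc$-module structure is immediate since $\otimes:\cc_g\times\cc_h\to\cc_{gh}$ already holds in $\cc$ and is not disturbed by passing to modules. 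Part (ii) for the object decomposition is identical. The nontrivial step in (ii) is the compatibility of the grading with $\otimes_A$: given $M\in{}_A\bimod_A(\cc_g)$ and $N\in{}_A\bimod_A(\cc_h)$, both parallel morphisms $r_M\otimes\mathrm{id},\ \mathrm{id}\otimes q_N:M\otimes A\otimes N\rightrightarrows M\otimes N$ are morphisms entirely inside $\cc_{gh}$ (using $A\in\cc_e$), and since graded components are closed under colimits the coequalizer $M\otimes_A N$ lies in $\cc_{gh}$ as well. I expect this coequalizer-in-a-graded-component step to be the main conceptual point, though it is short once phrased correctly.

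For part (iii), the relative braiding $c_{-,-}$ that encodes the central $G$-extension structure on $\cb$ provides, for each right $A$-module $(M,r)$, a canonical left action $r\circ c_{A,M}:A\otimes M\to M$; commutativity of $A$ together with the hexagon axioms for the relative braiding guarantee the two actions commute and make $M$ into an $(A,A)$-bimodule. This defines a fully faithful functor $\mod_A(\cb)\hookrightarrow{}_A\bimod_A(\cb)$, whose essential image is closed under $\otimes_A$ (again because $A$ is commutative and the relative braiding is natural), giving $\mod_A(\cb)$ a tensor structure. The $G$-grading from (ii) then restricts along this embedding to the desired $G$-grading on $\mod_A(\cb)$, and compatibility of $\otimes_A$ with the grading is inherited from part (ii). Finally, faithfulness of the grading on $\mod_A(\cb)$ is not asserted in the statement, so I would not need to address it.
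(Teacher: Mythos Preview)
Your proposal is correct and follows essentially the same approach as the paper: in (i) you decompose a module via inclusion--projection to define the componentwise actions and check compatibility with the $\cc$-action, in (ii) you use that the parallel morphisms defining $M\otimes_A N$ live entirely in $\cc_{gh}$ so that the coequalizer does too, and in (iii) you turn right modules into bimodules via the relative braiding and commutativity of $A$, embedding $\mod_A(\cb)$ into ${}_A\bimod_A(\cb)$ as a subcategory closed under $\otimes_A$. The only cosmetic difference is that the paper phrases the coequalizer step in (ii) as a contrapositive (``if $M\otimes_A N$ were not of degree $gh$ the epimorphism would be zero''), whereas you invoke closure of graded components under colimits directly---these are the same observation.
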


\subsection{Induced module categories}
Let $\cc$ be a $G$-graded finite tensor category and $\cn\simeq \mod_A(\cc_e)$ a $\cc_e$-module category. The \textit{induced module category} is defined as the $\cc$-module category $\ind_{\cc_e}^\cc(\cn):=\mod_A(\cc)$. Given a $G$-graded $\cc$-module category $\cm$, the \textit{restricted $\cc_e$-module category} is denoted by $\res_{\cc_e}^\cc(\cm):=\cm_e$.

\begin{Lemma}{\cite[Lemma 4.5]{MM}}
Let $\cc$ be a $G$-graded tensor category. 
\begin{enumerate}[(i)]
    \item $\cm$ is an exact $\cc$-module category if and only if $\res_{\cc_e}^\cc(\cm)$ is an exact $\cc_e$-module category.
    \item If $\cn$ is an exact $\cc_e$-module category, then $\ind_{\cc_e}^\cc(\cn)$ is exact as a $\cc$-module category.
\end{enumerate}
\label{exact_induction}
\end{Lemma}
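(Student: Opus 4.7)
Plan. The strategy is to prove (ii) first and then derive (i) from it.

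For (ii): I would realize $\cn\simeq\mod_A(\cc_e)$ for an exact algebra $A$ in $\cc_e$, so that by definition $\ind_{\cc_e}^\cc(\cn)=\mod_A(\cc)$. Proposition \ref{modules_graded}(i) decomposes this as $\bigoplus_{g\in G}\mod_A(\cc_g)$, and projectivity in $\mod_A(\cc)$ can be checked componentwise along the grading. To verify exactness, I would take a projective $P\in\cc$ and an $A$-module $M\in\mod_A(\cc)$, decompose both along the grading, and reduce to proving that $P\otimes M$ is projective in $\mod_A(\cc_{gh})$ whenever $P\in\cc_g$ is projective and $M\in\mod_A(\cc_h)$. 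The reduction exploits the faithful grading: picking any nonzero $Y\in\cc_{h^{-1}}$ gives $Y\otimes M\in\mod_A(\cc_e)$, where the exactness of $A$ in $\cc_e$ directly applies. Since each $\cc_g$ is an invertible $\cc_e$-bimodule category under a faithful grading, there is an equivalence of $\cc_e$-module categories $\mod_A(\cc_g)\simeq\cc_g\boxtimes_{\cc_e}\mod_A(\cc_e)$; this formalises the transport of projectivity across graded components via the rigidity of $\cc$ (both one-sided adjoints of $Y\otimes(-)$ are exact, so the functor preserves and detects projectivity between the relevant graded pieces).

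For (i): The forward implication is immediate once one observes that projectives of $\cc_e$ remain projective in $\cc$, because $\cc_e$ is a direct summand of $\cc$ as an abelian category. Indeed, for $M\in\cm_e$ and projective $P\in\cc_e$, the object $P\otimes M\in\cm_e$ is then projective in $\cm$, hence projective in the direct summand $\cm_e$. For the converse, I would exhibit $\cm\simeq\ind_{\cc_e}^\cc(\cm_e)$ as $\cc$-module categories. Fix a progenerator $N$ of the exact $\cc_e$-module category $\cm_e$ and set $A:=\innhom_\cm(N,N)$; the grading constraint from Proposition \ref{graded_innhom} forces $A\in\cc_e$. The internal-Hom functor $\cm\to\mod_A(\cc)$ given by $M\mapsto\innhom_\cm(N,M)$ respects the grading and is an equivalence of $\cc$-module categories: its restriction to the trivial component is the standard equivalence of Theorem \ref{modulecat_algebra_eq}, and the invertibility of $\cc_g$ as a $\cc_e$-bimodule category extends this identification to each graded piece. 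Part (ii) applied to the exact algebra $A$ then gives exactness of $\cm\simeq\mod_A(\cc)$ as a $\cc$-module category.

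The main obstacle is the transport-of-projectivity step in (ii), i.e.\ showing that the exactness of $A$ in $\cc_e$ really implies its exactness in the larger category $\cc$. This requires carefully combining the rigidity of $\cc$ with the invertibility of each $\cc_g$ as a $\cc_e$-bimodule category in order to control how projectivity of $A$-modules behaves when one crosses between graded components, and this is where the faithful-grading assumption is indispensable.
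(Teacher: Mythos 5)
Note first that the paper does not prove this lemma at all: it is quoted from \cite[Lemma 4.5]{MM}, so your argument has to stand on its own. Its easy parts are fine (projectives of $\cc_e$ stay projective in $\cc$ because $\cc_e$ is a direct summand of $\cc$, and $\cm_e$ is a direct summand of $\cm$), but the hard direction --- exactness of $\cn$ over $\cc_e$, resp.\ of $\cm_e$ over $\cc_e$, implies exactness over all of $\cc$ --- has a genuine gap exactly at the step you yourself flag as the main obstacle. The claim that $Y\otimes(-)\colon \mod_A(\cc_h)\to\mod_A(\cc_e)$ ``preserves and detects projectivity'' because both adjoints are exact is not correct: an exact right adjoint gives \emph{preservation} of projectives, never \emph{detection}, and detection fails badly in this generality (in a non-semisimple finite tensor category, tensoring with a projective object sends \emph{every} object to a projective one). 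Since evaluations and coevaluations do not split outside the semisimple case, knowing that $Y\otimes M$ lies in $\mod_A(\cc_e)$ and invoking exactness of $A$ there gives no information about $P\otimes M$ itself. Likewise, the appeal to invertibility of $\cc_g$ as a $\cc_e$-bimodule category and to an equivalence $\mod_A(\cc_g)\simeq\cc_g\boxtimes_{\cc_e}\mod_A(\cc_e)$ is unproved machinery, heavier than the lemma itself, and even granting it you would still need an argument for how exactness/projectivity passes through $\boxtimes_{\cc_e}$.

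What actually closes the gap is a splitting trick that uses projectivity of the specific objects involved, not any detection property: since $\mathbf{1}$ is simple, the evaluation $X\otimes{}^*X\to\mathbf{1}$ is epi, so for projective $P\in\cc_g$ and nonzero $X\in\cc_g$ the epimorphism $X\otimes{}^*X\otimes P\to P$ splits, i.e.\ $P$ is a direct summand of $X\otimes Q$ with $Q:={}^*X\otimes P\in\cc_e$ projective; similarly, for projective $Q\in\cc_e$ and nonzero $X\in\cc_h$, $Q$ is a direct summand of $X\otimes({}^*X\otimes Q\otimes X)\otimes{}^*X$. Combining these with the elementary fact (your adjunction argument, used correctly) that $X\otimes(-)$ carries projectives of $\cm$ to projectives of $\cm$ for \emph{every} $X\in\cc$, one gets: for $M\in\cm_h$, $Q\otimes M$ is a summand of $X\otimes\bigl(({}^*X\otimes Q\otimes X)\otimes({}^*X\otimes M)\bigr)$ with ${}^*X\otimes M\in\cm_e$, so exactness of $\cm_e$ over $\cc_e$ makes it projective; then $P\otimes M$ is a summand of $X\otimes(Q\otimes M)$, hence projective. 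This proves the converse of (i) directly, with no algebras, no relative Deligne products and no invertibility of $\cc_g$; part (ii) then follows by applying it to $\cm=\mod_A(\cc)$, whose trivial component is $\mod_A(\cc_e)\simeq\cn$. Your route for the converse of (i), via $\cm\simeq\ind_{\cc_e}^\cc(\cm_e)$, has the additional problem that it presupposes the essential content of Theorem \ref{galindo}, which in this paper is justified \emph{using} Lemma \ref{exact_induction}; moreover Theorem \ref{modulecat_algebra_eq} only yields $\cm_e\simeq\mod_A(\cc_e)$, and extending $\innhom_\cm(N,-)$ to an equivalence $\cm\simeq\mod_A(\cc)$ before knowing that $\cm$ is exact over $\cc$ is precisely the nontrivial point, not something ``invertibility of $\cc_g$'' settles by fiat.
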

There is a correspondence between exact graded module categories over a graded tensor category and exact module categories over its trivial component.
\begin{Theorem}{\cite[Theorem 3.3]{Ga}}\label{galindo}\\
Let $\cc$ be a $G$-graded finite tensor category.
Induction and restriction of module categories determine a $2$-equivalence between exact $G$-graded $\cc$-module categories and exact $\cc_e$-module categories.
\end{Theorem}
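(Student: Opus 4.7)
The plan is to verify that induction and restriction are mutually inverse 2-functors. Well-definedness on objects follows from Proposition~\ref{modules_graded}(i), which equips $\mod_A(\cc)$ with a canonical $G$-grading, together with Lemma~\ref{exact_induction}, which guarantees exactness in both directions. On 1- and 2-morphisms, both constructions are defined through the Eilenberg--Watts equivalence~(\ref{EW_equivalence}), translating module functors into bimodules which then restrict or freely extend along $\cc_e \hookrightarrow \cc$. The easy direction is the unit of the adjunction: for $\cn \simeq \mod_A(\cc_e)$ with $A$ exact, Proposition~\ref{modules_graded}(i) yields at once
$$\res^\cc_{\cc_e}\bigl(\ind^\cc_{\cc_e}(\cn)\bigr) \,=\, \mod_A(\cc)_e \,=\, \mod_A(\cc_e) \,\simeq\, \cn,$$
and this identification is natural in $\cn$.

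The main content is the counit. Given an exact $G$-graded $\cc$-module $\cm$, I would decompose $\cm_e = \bigoplus_i \cm_e^{(i)}$ into indecomposable exact $\cc_e$-modules, choose a progenerator $M_i \in \cm_e^{(i)}$, and set $M := \bigoplus_i M_i$. Proposition~\ref{decomposition_inn_hom} identifies $A := \innhom_{\cm_e}(M,M)$ with the product of the indecomposable exact algebras $A_i := \innhom_{\cm_e^{(i)}}(M_i,M_i)$, and by Proposition~\ref{algebra_decomposition} $A$ is itself exact in $\cc_e$. Since $M\in\cm_e$, Proposition~\ref{graded_innhom} shows that $\innhom_\cm(M,-)$ agrees with $\innhom_{\cm_e}(M,-)$ on $\cm_e$ (so $\innhom_\cm(M,M)=A$ as algebras) and sends $\cm_g$ into $\cc_g$. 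Hence the canonical $\cc$-module functor
$$F \,:=\, \innhom_\cm(M,-) \,:\, \cm \longrightarrow \mod_A(\cc)$$
is grading-preserving, and its restriction to $\cm_e$ coincides summand-wise with the equivalence of Theorem~\ref{modulecat_algebra_eq}.

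To upgrade $F$ to an equivalence on all of $\cm$, I would show that $M$ remains a projective generator of $\cm$ viewed as a $\cc$-module. Projectivity passes from $\cm_e$ to $\cm$ because the graded decomposition makes $\cm_e$ an abelian direct summand. For the generator property on a component $\cm_g$, faithfulness of the grading provides a nonzero $X \in \cc_{g^{-1}}$; for $N \in \cm_g$ one then has $X \otimes N \in \cm_e$, which is covered by some $Z \otimes M$ with $Z \in \cc_e$, and tensoring by $X^*$ followed by the evaluation $X^* \otimes X \to \mathbf{1}$ (which is epic because $\mathbf{1}$ is simple and $X \neq 0$) yields an epimorphism $(X^* \otimes Z) \otimes M \twoheadrightarrow N$ with $X^* \otimes Z \in \cc_g$. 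Decomposing $\cm$ into its indecomposable $\cc$-module summands and invoking Theorem~\ref{modulecat_algebra_eq} on each, one concludes that $F$ is an equivalence of $G$-graded $\cc$-modules, realizing the counit $\ind^\cc_{\cc_e}(\res^\cc_{\cc_e}(\cm)) \xrightarrow{\sim} \cm$. The principal obstacle is exactly this generator claim on the nontrivial components, i.e., the assertion that the $\cc_e$-progenerator of the trivial part remains a progenerator once the action is enlarged to $\cc$, or equivalently that $\cm$ is ``induced'' from $\cm_e$; this is the single place where both rigidity of $\cc$ and faithfulness of the grading are genuinely required.
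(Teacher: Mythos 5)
Your argument is essentially correct, but it takes a genuinely different route from the paper: the paper does not reprove the statement at all, it simply cites \cite[Theorem 3.3]{Ga} (stated there for fusion categories), observes that Galindo's proof nowhere uses semisimplicity, and invokes Lemma \ref{exact_induction} to see that induction and restriction stay within the exact world. You instead give a self-contained proof in the finite, non-semisimple setting, built entirely from the internal-Hom machinery of Section 2: the unit is immediate from Proposition \ref{modules_graded}(i), and for the counit you realize an exact graded $\cm$ as $\mod_A(\cc)$ with $A=\innhom_{\cm_e}(M,M)\in\cc_e$, using Proposition \ref{graded_innhom} to see that $\innhom_\cm(M,-)$ is grading-preserving and agrees with the $\cc_e$-internal Hom on $\cm_e$, and then proving the key generator statement (every $N\in\cm_g$ is covered by $W\ot M$, via a nonzero $X\in\cc_{g^{-1}}$ and epicness of $\mathrm{ev}_X$) --- which is exactly the point where faithfulness and rigidity enter, just as you say. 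What your route buys is a verification, inside this paper's framework, of the assertion that semisimplicity is not needed, rather than deferring to \cite{Ga}; what the paper's route buys is brevity and direct reuse of the published fusion-case proof. Two small points you should still record to make your sketch airtight: the indecomposable $\cc$-module summands of $\cm$ need not refine (nor be refined by) the chosen $\cc_e$-decomposition of $\cm_e$, so when you invoke Theorem \ref{modulecat_algebra_eq} summand-wise you must note that the component of $M$ in each $\cc$-summand is nonzero (this follows from your generator property, and Proposition \ref{decomposition_inn_hom} then matches $F$ with the direct sum of the Etingof--Ostrik equivalences); and to obtain a $2$-equivalence, not merely object-wise equivalences, you should check that your unit and counit identifications are pseudonatural with respect to graded module functors, e.g.\ via the graded Eilenberg--Watts equivalence of Proposition \ref{EW_graded}.
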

\begin{proof}
The statement in \cite[Theorem 3.3]{Ga} is for the fusion case, but the proof does not require semisimplicity, and in view of Lemma \ref{exact_induction} the result holds restricting to the class of exact module categories.
\end{proof}
The following corollary is an immediate consequence from Theorem \ref{galindo}.
\begin{Corollary}\label{exact__graded_module_algebra}
For any exact $G$-graded $\cc$-module category $\cm$, there exists an algebra $A$ in the trivial component $\cc_e$ such that $\cm\simeq \mod_A(\cc)$ as $G$-graded $\cc$-module categories.

\end{Corollary}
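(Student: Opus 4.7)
The plan is to reduce to the trivial component by restriction, realize the restricted module category as modules over an internal algebra in $\cc_e$, and then transport back via induction using Theorem \ref{galindo}. Concretely, given an exact $G$-graded $\cc$-module category $\cm$, set $\cn:=\res_{\cc_e}^\cc(\cm)=\cm_e$. By Lemma \ref{exact_induction} the $\cc_e$-module category $\cn$ is exact, and by the general theory of internal Homs (Theorem \ref{modulecat_algebra_eq}, combined with the decomposition statement in Proposition \ref{algebra_decomposition} for the possibly non-indecomposable case) there exists an algebra $A$ in $\cc_e$ and an equivalence $\cn\simeq\mod_A(\cc_e)$ of $\cc_e$-module categories; explicitly one can take $A=\innhom_{\cn}(P,P)$ for a projective generator $P\in\cn$, or pick one algebra per indecomposable summand and form the product.

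Next I would observe that by the very definition of induction, $\ind_{\cc_e}^\cc(\mod_A(\cc_e))=\mod_A(\cc)$, and that Proposition \ref{modules_graded}(i) endows $\mod_A(\cc)$ with the canonical $G$-grading $\mod_A(\cc)=\Gsum\mod_A(\cc_g)$. By Theorem \ref{galindo}, induction is the inverse (up to natural $2$-equivalence) of restriction on the $2$-categories of exact graded module categories, so
\begin{equation*}
\cm\;\simeq\;\ind_{\cc_e}^\cc(\res_{\cc_e}^\cc(\cm))\;\simeq\;\ind_{\cc_e}^\cc(\mod_A(\cc_e))\;=\;\mod_A(\cc)
\end{equation*}
as $G$-graded $\cc$-module categories, which is the desired statement.

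There is no real obstacle: the only subtlety is that Theorem \ref{modulecat_algebra_eq} is stated for indecomposable module categories, but this is handled exactly by Proposition \ref{algebra_decomposition}, which guarantees that a product of the algebras representing the indecomposable summands of $\cn$ does the job. Everything else is a direct application of the two-sided equivalence provided by Theorem \ref{galindo}.
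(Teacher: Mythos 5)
Your argument is correct and takes essentially the same route as the paper, which simply declares this corollary an immediate consequence of Theorem \ref{galindo}: restrict to $\cm_e$, realize it as $\mod_A(\cc_e)$ for an algebra $A\in\cc_e$, and transport back by induction, noting $\ind_{\cc_e}^\cc(\mod_A(\cc_e))=\mod_A(\cc)$ with the grading of Proposition \ref{modules_graded}. The extra care you take for a possibly decomposable $\cm_e$ (taking the product of the algebras attached to its indecomposable summands, or the internal End of a projective generator) is a detail the paper leaves implicit, and it is handled correctly.
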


\subsection{Graded module functors}
\begin{Definition}
Let $\cc$ be a $G$-graded tensor category and let $\cm$ and $\cn$ be $G$-graded $\cc$-module categories.
A $\cc$-module functor $F:\cn\to\cm$ is called \textit{homogeneous of degree $g\in G$}, if $F(\cn_x)\subset \cm_{xg}$ for every $x\in G$. 
A \textit{graded $\cc$-module functor} is a module functor of trivial homogeneous degree.
\end{Definition}
The full subcategory of $\Fun_\cc(\cn,\cm)$ whose objects are homogeneous module functors of degree $g\in G$ is denoted by $\Fun_\cc(\cn,\cm)_g$.
\begin{Proposition}\label{grading_module_functors}
Let $\cm$, $\cn$ and $\cl$ be $G$-graded $\cc$-module categories, then the composition of module functors restricts to
\begin{equation}\label{comp_grading}
    \circ\;:\Fun_\cc(\cn,\cl)_g \times\Fun_\cc(\cm,\cn)_h\to\Fun_\cc(\cm,\cl)_{hg}
\end{equation}
and the category of module functors decomposes as $$\Fun_\cc(\cn,\cm)=\Gsum \Fun_\cc(\cn,\cm)_g$$
In particular, for $\cm$ exact, the dual category $\cc_\cm^{*}$ is $G\op$-graded and $\Fun_\cc(\cn,\cm)$ is a $G\op$-graded module category over $\cc_\cm^{*}$. Similarly ${\cc_\cm^{*}}\op$ is a $G$-graded tensor category and  $\Fun_\cc(\cm,\cn)$ is a $G$-graded module category over ${\cc_\cm^{*}}\op$.
\begin{proof}
It is straightforward to check the compatibility of composition with the grading as expressed in (\ref{comp_grading}).\\

Given a family of functors $\{(F_g,\phi_g)\}_{g\in G}$ with
$(F_g,\phi_g)\in \Fun_\cc(\cn,\cm)_g$, their sum is defined as the functor
\begin{align*}
    \bigoplus_{g\in G}F_g:\cn\longrightarrow\cm,\quad
    N\longmapsto\bigoplus_{g\in G}F_g(N)
\end{align*}
and the isomorphisms defined for $X\in\cc$ and $N\in\cn$ by
\begin{align*}
    (\bigoplus_{g\in G}\phi_g)_{X,N}:\bigoplus_{g\in G}F_g(X\otimes N)\xrightarrow{\oplus_{g\in G}(\phi_g)_{X,N}}\bigoplus_{g\in G}X\otimes F_g(N)=X\otimes\bigoplus_{g\in G}F_g(N)
\end{align*}
provide a $\cc$-module functor structure on $\Gsum F_g$.\\
    
Conversely for $(F,\phi)\in \Fun_\cc(\cn,\cm)$ and $g,h\in G$ consider the composition of functors
$$\cn_h\xrightarrow{\,\iota_h\,}\cn\xrightarrow{\;F\;}\cn\xrightarrow{p_{hg}}\cn_{hg}$$
and define a homogeneous functor $F_g:=\bigoplus_{h\in G}p_{hg}\circ F\circ \iota_h$ with $\cc$-module structure $(\phi_g)_{X,N}$ given for homogeneous objects $X\in\cc_x$ and $N\in\cn_h$ by
    \begin{align*}
    F_g(X\otimes N)&=F(X\otimes N)_{xhg}\xrightarrow{p_{xhg}( \phi_{X,N})} \left( X\otimes F(N)\right)_{xhg}= X\otimes \left(F(N)\right)_{hg}=X\otimes F_g(N)
    \end{align*}
then $(F_g,\phi_g)\in \Fun_\cc(\cn,\cm)_g$. Moreover, their sum correspond to a decomposition of $F$ since for $N\in\cn$
    $$\bigoplus_{g\in G}F_g(N)=\bigoplus_{h,g\in G}F(N_h)_{hg}=\bigoplus_{h\in G}F(N_h)=F(N)$$
where the last line follows since $F$ preserves finite sums.
\end{proof}
\end{Proposition}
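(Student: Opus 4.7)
My plan is to deduce everything from the basic observation that the $G$-gradings on $\cc$, $\cm$, $\cn$, $\cl$ force any $\cc$-module functor to respect homogeneous degrees component-wise.

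First, for the compatibility of composition with the grading in (\ref{comp_grading}), I would argue directly: if $F \in \Fun_\cc(\cn,\cl)_g$ and $H \in \Fun_\cc(\cm,\cn)_h$, then for $M \in \cm_x$ we have $H(M) \in \cn_{xh}$ and therefore $F(H(M)) \in \cl_{xhg}$, so $F \circ H$ is homogeneous of degree $hg$, with the order reversed as claimed.

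Next, for the decomposition $\Fun_\cc(\cn,\cm) = \Gsum \Fun_\cc(\cn,\cm)_g$, the plan is in two directions. Given a family $\{(F_g,\phi_g)\}_{g\in G}$ of homogeneous module functors, I would set $F := \bigoplus_g F_g$ objectwise with coherence $\bigoplus_g (\phi_g)_{X,N}$; this is manifestly a $\cc$-module functor. Conversely, given $(F,\phi) \in \Fun_\cc(\cn,\cm)$, I would define $F_g := \bigoplus_{h\in G} p_{hg} \circ F \circ \iota_h$, where $\iota_h: \cn_h \hookrightarrow \cn$ and $p_k: \cm \to \cm_k$ are the canonical inclusion and projection supplied by the grading. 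The coherence $\phi$ then projects to a coherence $\phi_g$ because the $\cc$-action preserves homogeneous degrees. That $F = \bigoplus_g F_g$ follows from additivity of $F$ together with completeness of the grading decomposition of $\cm$.

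Finally, specializing the composition formula to $\cm = \cn = \cl$ yields $\deg(F \circ G) = \deg(G)\,\deg(F)$ on $\cc_\cm^* = \End_\cc(\cm)$, which is exactly a $G\op$-grading on the dual tensor category; the same formula shows that post-composition makes $\Fun_\cc(\cn,\cm)$ into a $G\op$-graded module category over $\cc_\cm^*$. Passing to the opposite tensor category $(\cc_\cm^*)\op$ reverses the multiplication and thus converts the $G\op$-grading back into a $G$-grading, and pre-composition gives $\Fun_\cc(\cm,\cn)$ the structure of a $G$-graded module category over $(\cc_\cm^*)\op$. The main obstacle I anticipate is the bookkeeping in the decomposition argument: verifying that the projected coherence $(\phi_g)_{X,N}$ is genuinely an isomorphism and satisfies the module pentagon. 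This reduces to checking that projection onto the degree-$xhg$ component of $\cm$ commutes with $\phi$, which holds because, for $X \in \cc_x$ and $N \in \cn_h$, the object $X \otimes F(N)_{hg}$ already lies in $\cm_{xhg}$, so projecting both sides of $\phi_{X,N}$ onto $\cm_{xhg}$ gives precisely the desired isomorphism $F_g(X \otimes N) \xrightarrow{\sim} X \otimes F_g(N)$.
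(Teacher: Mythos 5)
Your proposal is correct and follows essentially the same route as the paper: the identical direct-degree check for (\ref{comp_grading}), the same definition $F_g=\bigoplus_{h\in G}p_{hg}\circ F\circ\iota_h$ with the coherence obtained by projecting $\phi_{X,N}$ onto the degree-$xhg$ component, and recovery of $F$ from additivity. Your explicit verification of the ``in particular'' statements about $\cc_\cm^{*}$, ${\cc_\cm^{*}}\op$ and the (pre/post)-composition module structures only spells out what the paper leaves implicit.
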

A direct computation shows that in the equivariant setting the Eilenberg-Watts equivalence is compatible with the grading:
\begin{Proposition}\label{EW_graded}
Given algebras $A$ and $B$ in the trivial component of a $G$-graded tensor category $\cc$, the Eilenberg-Watts equivalence
$${}_A\bimod_B(\cc)\xrightarrow{\,\sim\,} \Fun^{\text{r}}_\cc(\mod_A(\cc),\mod_B(\cc))$$
given by equation (\ref{EW_equivalence}) is grading preserving.
\end{Proposition}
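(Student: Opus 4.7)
The plan is to verify directly on each homogeneous component that $M \mapsto -\otimes_A M$ sends ${}_A\bimod_B(\cc_g)$ into $\Fun^{\text{r}}_\cc(\mod_A(\cc),\mod_B(\cc))_g$; since this map is already known to be an equivalence of categories, checking the degree condition in one direction (together with the quasi-inverse) will show that it restricts to an equivalence between the $g$-components for every $g\in G$, which is precisely the definition of grading-preserving.

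First, fix $M\in{}_A\bimod_B(\cc_g)$, i.e.\ with underlying object in $\cc_g$, and take $N\in\mod_A(\cc)_x$. The relative tensor product $N\otimes_A M$ is by definition the coequalizer in $\cc$ of the two maps $N\ot A\ot M \rightrightarrows N\ot M$ induced by the right $A$-action on $N$ and the left $A$-action on $M$. Because $A\in\cc_e$, the objects $N\ot A\ot M$ and $N\ot M$ both lie in $\cc_{xg}$ by the $G$-grading on $\cc$. I would now invoke the exact same grading argument used in the proof of Proposition \ref{modules_graded}(ii): decomposing $N\otimes_A M = \bigoplus_{h\in G}(N\otimes_A M)_h$, the epic coequalizer morphism $N\ot M \twoheadrightarrow N\otimes_A M$ factors through the $(xg)$-component, forcing the other summands to be zero. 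The right $B$-action then upgrades $N\otimes_A M$ to an object of $\mod_B(\cc)_{xg}$, so $-\otimes_A M$ is a homogeneous $\cc$-module functor of degree $g$.

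For the converse direction I would use that the quasi-inverse of the Eilenberg-Watts equivalence sends a right exact $\cc$-module functor $F$ to $F(A)$, viewed as $(A,B)$-bimodule via the canonical $A$-action on the source and the natural left $A$-action on the target coming from the module structure. Since $A\in \mod_A(\cc)_e$, if $F\in\Fun^{\text{r}}_\cc(\mod_A(\cc),\mod_B(\cc))_g$ then $F(A)\in\mod_B(\cc)_g$, so its underlying object lies in $\cc_g$ and $F(A)\in {}_A\bimod_B(\cc_g)$. Combining both directions shows that the Eilenberg-Watts equivalence identifies the homogeneous components of degree $g$ on the two sides.

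There is no real obstacle here; the only mildly subtle point is the stability of the grading under the coequalizer forming $\otimes_A$, and that has already been handled in Proposition \ref{modules_graded}(ii) and can simply be referenced. The argument is essentially a bookkeeping check that the gradings on bimodules and on module functors, as set up in Propositions \ref{modules_graded} and \ref{grading_module_functors}, are transported into one another by the equivalence $M\mapsto -\otimes_A M$.
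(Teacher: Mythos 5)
Your argument is correct, and it is exactly the ``direct computation'' that the paper leaves to the reader: the paper states Proposition \ref{EW_graded} without a written proof, and your check that $-\otimes_A M$ is homogeneous of the same degree as $M$ (via the coequalizer argument already used in Proposition \ref{modules_graded}(ii)), together with the observation that the quasi-inverse $F\mapsto F(A)$ sends degree-$g$ functors to bimodules with underlying object in $\cc_g$, is precisely the intended verification. No gap; if anything, note that the first direction alone already suffices, since an equivalence mapping each homogeneous component into the corresponding component of a direct-sum decomposition automatically restricts to equivalences of components.
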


\subsection{Graded Morita equivalence}

\begin{Definition}\label{graded_Morita_eq}
Two $G$-graded tensor categories $\cc$ and $\cd$ are said to be \textit{graded Morita equivalent} if there exists a $G$-graded $\cc$-module category $\cm$ together with a $G$-graded tensor equivalence $\cd\simeq \cc_\cm^{*}\op$.
\end{Definition}

\begin{Remark}
In the situation of Definition \ref{graded_Morita_eq}, the module category $\cm$ is necessarily:
\begin{enumerate}[(i)]
    \item Indecomposable: Follows from the fact that the identity functor of $\cm$ has to be simple in $\cc_\cm^{*}$ (see \cite[Lemma 3.24]{EO}).
    \item Exact: Rigidity of $\cc_\cm^{*}$ implies that every endofunctor of $\cm$ is exact, which means that the $\cc$-module category $\cm$ is exact (see \cite[Proposition 3.16]{EO}).
\end{enumerate}
\end{Remark}

\begin{Remark} For a $G$-graded finite tensor category $\cc$, similarly to the non-graded case, we have the following remarks:
\begin{enumerate}[(i)]
    \item Consider the regular graded module category $\cc$, then there is a $G$-graded tensor equivalence
    $$\cc\xrightarrow{\;\sim\;}(\cc_\cc^{*})\op,\quad X\mapsto -\ot X$$
    \item Let $\cm$ be an exact $G$-graded $\cc$-module category, then $\cm$ is naturally a $G\op$-graded module category over $\cc_\cm^{*}$, and the double dual $(\cc_\cm^{*})_\cm^{*}$ is a $G$-graded tensor category.
    \item Notice that the canonical tensor equivalence is $G$-graded
    \begin{align*}
        \text{can}:\,\cc \xrightarrow{\;\sim\;} (\cc_\cm^{*})_\cm^{*}\,,\quad X\mapsto X\ot-
    \end{align*}
    taking into consideration (ii).
    \item From Corollary \ref{exact__graded_module_algebra} and Proposition \ref{EW_graded}, the notion of graded Morita equivalence between $\cc$ and $\cd$ can be described by the existence of an exact algebra $A$ in $\cc_e$ together with a $G$-graded tensor equivalence $\cd\simeq{}_A\bimod_A(\cc)$.
\end{enumerate}
\label{remark_eq_relation}
\end{Remark}

\begin{Proposition}
The notion of graded Morita equivalence is an equivalence relation on $G$-graded finite tensor categories.
\begin{proof}
Remark \ref{remark_eq_relation} (i) exhibits reflexivity as well as (iii) implies symmetry. Now transitivity follows in the same manner as in the non-graded case shown in \cite[Proposition 7.12.18]{EGNO}, if one takes into consideration that the algebras involved are always in the trivial component of the corresponding graded tensor category.
\end{proof}
\end{Proposition}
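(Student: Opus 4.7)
The plan is to use the algebraic reformulation of graded Morita equivalence from Remark \ref{remark_eq_relation}(iv): $\cc$ and $\cd$ are graded Morita equivalent if and only if there is an exact algebra $A\in\cc_e$ together with a $G$-graded tensor equivalence $\cd\simeq{}_A\bimod_A(\cc)$. Working at the level of algebras in the trivial component makes the grading bookkeeping transparent, since Proposition \ref{modules_graded}(ii) automatically promotes any bimodule category over such an algebra to a $G$-graded tensor category.

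Reflexivity falls out from the unit algebra $A=\mathbf{1}$, which is trivially exact and recovers $\cc$ itself; this is also the content of Remark \ref{remark_eq_relation}(i). For symmetry, if $\cd\simeq\cc_\cm^*\op$ via a $G$-graded $\cc$-module category $\cm$, I would use Remark \ref{remark_eq_relation}(ii) to regard $\cm$ as a $G$-graded module category over $\cd$ and then invoke the canonical $G$-graded tensor equivalence $\cc\simeq(\cc_\cm^*)_\cm^*$ from Remark \ref{remark_eq_relation}(iii); after the standard identification arising from taking opposites of dualising tensor categories, this reads $\cc\simeq\cd_\cm^*\op$, exhibiting graded Morita equivalence in the reverse direction.

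The substantive step is transitivity. Assume $\cd\simeq{}_A\bimod_A(\cc)$ with $A\in\cc_e$ exact and $\mathcal{E}\simeq{}_B\bimod_B(\cd)$ with $B\in\cd_e={}_A\bimod_A(\cc_e)$ exact. The strategy is to promote $B$ to an algebra in $\cc_e$: composing the unit of $\cc$ with the unit $\eta:A\to B$ of $B$ in $\cd_e$ supplies a unit $\mathbf{1}\to B$, and composing the canonical epimorphism $B\otimes B\twoheadrightarrow B\otimes_A B$ with the multiplication of $B$ in $\cd_e$ supplies a multiplication $B\otimes B\to B$. The crux is then a canonical $G$-graded tensor equivalence
\[
{}_B\bimod_B({}_A\bimod_A(\cc))\;\simeq\;{}_B\bimod_B(\cc),
\]
identical on underlying objects and relying on the fact that the $A$-bimodule structure on a $B$-bimodule in $\cc$ is uniquely reconstructed from the $B$-actions via $\eta$. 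Composing with the hypothesis equivalences yields $\mathcal{E}\simeq{}_B\bimod_B(\cc)$ as $G$-graded tensor categories.

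The hard part will be verifying that $B$, viewed as an algebra in $\cc_e$, is still exact. The cleanest route I would take is Morita invariance of the $2$-category of exact module categories: since $A$ already witnesses Morita equivalence of $\cc_e$ and $\cd_e$, the exact $\cd_e$-module category $\mod_B(\cd_e)$ transports to an exact $\cc_e$-module category, which one identifies with $\mod_B(\cc_e)$ under the construction above. This essentially reproduces the non-graded argument of \cite[Proposition 7.12.18]{EGNO}; the only adaptation needed in the graded setting is the observation that each algebra involved stays in the trivial component, so that Proposition \ref{modules_graded} guarantees every equivalence used above is automatically $G$-graded.
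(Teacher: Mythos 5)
Your proposal is correct and follows essentially the same route as the paper: reflexivity and symmetry are read off from Remark \ref{remark_eq_relation}, and transitivity is the non-graded algebra argument of \cite[Proposition 7.12.18]{EGNO} (promoting the algebra $B\in\cd_e\simeq{}_A\bimod_A(\cc_e)$ to an exact algebra in $\cc_e$ and using ${}_B\bimod_B({}_A\bimod_A(\cc))\simeq{}_B\bimod_B(\cc)$), together with precisely the observation the paper highlights, namely that all algebras involved lie in trivial components so that Proposition \ref{modules_graded} makes every equivalence $G$-graded. Your write-up merely spells out the details that the paper leaves to the citation.
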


From Definition \ref{graded_Morita_eq} it immediately follows that two graded Morita equivalent graded tensor categories are also Morita equivalent just as tensor categories. As shown next, their trivial components are Morita equivalent as well.
\begin{Proposition}\label{trivial_components_Morita_equivalent}
If two $G$-graded finite tensor categories $\cc$ and $\cd$ are graded Morita equivalent, then their trivial components $\cc_e$ and $\cd_e$ are Morita equivalent.
\begin{proof}
Since $\cc$ and $\cd$ are graded Morita equivalent there is an exact $G$-graded $\cc$-module category $\cm=\mod_A(\cc)$ and we have that
\begin{align*}
    (\cc_\cm^*)_e=\End_\cc(\cm)_e\simeq{}_A\bimod_A(\cc_e)\op\simeq\End_{\cc_e}(\cm_e)=(\cc_e)_{\cm_e}^*
\end{align*}
where the first equivalence takes into account that the Eilenberg-Watts equivalence preserves the grading as shown in Proposition \ref{EW_graded}. Consequently, a graded tensor equivalence $\cd\simeq\cc_\cm^*\op$ induces a tensor equivalence $\cd_e\simeq(\cc_e)_{\cm_e}^*\op$.
\end{proof}
\end{Proposition}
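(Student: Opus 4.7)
The plan is to invoke the algebra characterization of graded Morita equivalence from Remark \ref{remark_eq_relation}(iv), which replaces the module category by an exact algebra $A$ living in the trivial component $\cc_e$, and then extract the non-graded Morita equivalence by restricting everything to the trivial $G$-component. The key observation is that, in the $G$-graded setting, the ``trivial component'' operation should commute with all the structural constructions involved: forming $A$-$A$-bimodules, the Eilenberg--Watts equivalence, and passage to the dual tensor category.

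First, I would write the hypothesis using the algebra description: there exists an exact algebra $A$ in $\cc_e$ and a $G$-graded tensor equivalence $\cd \simeq {}_A\bimod_A(\cc)$. By Proposition \ref{modules_graded}(ii), the bimodule category ${}_A\bimod_A(\cc)$ is $G$-graded with trivial component ${}_A\bimod_A(\cc_e)$, and this decomposition is compatible with $\otimes_A$. Taking the degree-$e$ part of the graded tensor equivalence then yields a tensor equivalence
\begin{equation*}
    \cd_e \;\simeq\; {}_A\bimod_A(\cc_e).
\end{equation*}

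Second, I would check that $A$ qualifies as an exact algebra in $\cc_e$, so that the non-graded Morita picture applies. Since $\cm := \mod_A(\cc)$ is exact as a $\cc$-module category, Lemma \ref{exact_induction}(i) implies that $\res^\cc_{\cc_e}(\cm) = \cm_e = \mod_A(\cc_e)$ is exact as a $\cc_e$-module category; indecomposability of $\cm$ as a $G$-graded $\cc$-module category transports to indecomposability of $\cm_e$ via the $2$-equivalence in Theorem \ref{galindo}. Hence $A$ is an exact indecomposable algebra in $\cc_e$, and the non-graded Eilenberg--Watts equivalence \eqref{EW_equivalence} gives a tensor equivalence ${}_A\bimod_A(\cc_e) \simeq (\cc_e)^*_{\cm_e} \op$. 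Combining this with the previous step produces a tensor equivalence $\cd_e \simeq (\cc_e)^*_{\cm_e}\op$, which is the desired Morita equivalence between $\cc_e$ and $\cd_e$.

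The only real point to verify carefully is that the degree-$e$ part of the $G$-graded Eilenberg--Watts equivalence of Proposition \ref{EW_graded} is the ordinary Eilenberg--Watts equivalence applied to ${}_A\bimod_A(\cc_e)$; this is bookkeeping rather than a genuine obstacle, since grading preservation immediately identifies the trivial component of $\Fun^r_\cc(\cm,\cm)$ with graded module endofunctors of $\cm$, which restrict to $\cc_e$-module endofunctors of $\cm_e$. Thus the argument is essentially a compatibility check: all the standard Morita-theoretic identifications respect the $G$-grading, so reading them off in degree $e$ yields the statement.
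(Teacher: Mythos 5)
Your proposal is correct and follows essentially the same route as the paper: both pass to the algebra picture via the graded Eilenberg--Watts equivalence (Proposition \ref{EW_graded} together with Corollary \ref{exact__graded_module_algebra}, i.e.\ Remark \ref{remark_eq_relation}(iv)), restrict to the trivial component, and identify ${}_A\bimod_A(\cc_e)\op$ with $(\cc_e)^*_{\cm_e}$ by the non-graded Eilenberg--Watts equivalence. Your extra verification that $\cm_e$ is exact and indecomposable over $\cc_e$ (via Lemma \ref{exact_induction} and Theorem \ref{galindo}) is a point the paper leaves implicit, but it is the same argument.
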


\begin{Remark}\label{counter_converse}
The converse of Proposition \ref{trivial_components_Morita_equivalent} does not hold in general. Morita equivalent tensor categories can have $G$-extensions which are not graded Morita equivalent:


Consider for instance the finite cyclic group $G=\Z/p\Z$  with $p$ prime and the tensor category $\cc_e=\cd_e=\text{Vec}$. The $G$-extensions $\cc=\text{Vec}_G$ and $\cd=\text{Vec}_G^\omega$ with $\omega\in H^3(G;\C^\times)$ a non-trivial $3$-cocycle have different number of indecomposable module categories according to \cite[Example 2.1]{O}. Hence $\cc$ and $\cd$ are not Morita equivalent and therefore $\cc$ and $\cd$ cannot be $G$-graded Morita equivalent.
\end{Remark}

\subsection{The 2-category of graded module categories}\label{subsection_2-category_of_graded_modules}
For every $G$-graded tensor category $\cc$ there is associated a $2$-category $\textbf{\mod}^{\text{Gr}}(\cc)$ whose objects are exact $G$-graded $\cc$-module categories, the $1$-morphisms are graded module functors and the $2$-morphisms are module natural transformations.\\

Recall the notion of group actions on $2$-categories, as discussed for example in \cite{HSV} and \cite{BGM}. A strict $G$-action on a $2$-category $\cb$ is a collection of $2$-functors $\{g_\cdot\,:\cb\to\cb\}_{g\in G}$ such that $g_\cdot\circ h_\cdot=(gh)_\cdot$ for every $g,h\in G$. An equivalence of $2$-categories with group action is a $2$-equivalence $\Psi$ with a $G$-structure $\gamma_g:\Psi\circ g_\cdot\xRightarrow{\sim}g_\cdot\circ\Psi$ fulfilling certain conditions, see \cite[Definition 2.3]{BGM} for a complete definition.\\

The $G$-grading of $\cc$ induces an additional structure on the $2$-category $\textbf{\mod}^{\text{Gr}}(\cc)$, namely a strict left action of $G$ which is given by shifting the grading: For $\cn\in\textbf{\mod}^{\text{Gr}}(\cc)$ and $g\in G$, define a $G$-graded $\cc$-module category $g_\cdot \,\cn$ by $\cn$ as a $\cc$-module category, but with $G$-grading described by the following homogeneous components $$\left[g_\cdot \,\cn\right]_x:=\cn_{xg},\quad\text{for}\; x\in G$$
Every $1$-morphism $F:\cn\to\cl$ induces a $1$-morphism $g_\cdot \,F:g_\cdot \,\cn\to g_\cdot \,\cl,\;N\mapsto F(N)$, and the assignment on $2$-morphisms is similarly defined. \\

Notice that for $\cm,\cn\in\textbf{\mod}^{\text{Gr}}(\cc)$ and $g\in G$
$$\Fun_\cc(\cm,g_\cdot \,\cn)_e=\Fun_\cc(\cm,\cn)_g= \Fun_\cc(g^{-1}_\cdot \cm,\cn)_e$$
which are $\cc$-module functors $F:\cm\to\cn$ such that $F(\cm_x)\subset\cn_{xg}$ for all $x\in G$.\\

The action of $G$ on the graded module categories of a graded tensor category plays an important role in the notion of graded Morita equivalence.

\begin{Theorem}\label{graded_Morita_graded_modules}
Two $G$-graded finite tensor categories $\cc$ and $\cd$ are graded Morita equivalent if and only if $\textbf{\mod}^{\text{Gr}}(\cc)$ and $\textbf{\mod}^{\text{Gr}}(\cd)$ are equivalent as $2$-categories with $G$-action.

\begin{proof}
Given an exact graded $\cc$-module category $\cm$, from \cite[Theorem 7.12.16]{EGNO} and considering Proposition \ref{grading_module_functors} we have a $2$-equivalence 
\begin{align*}
    \Psi:\,\textbf{\mod}^{\text{Gr}}(\cc)&\longrightarrow\textbf{\mod}^{\text{Gr}}(\cc_\cm^*\op)\\
    \cn&\longmapsto \Fun_\cc(\cm,\cn)
\end{align*}
which has a strict $G$-structure, i.e., the diagram of $2$-functors
\begin{center}
    \begin{tikzcd}
    \textbf{\mod}^{\text{Gr}}(\cc)\ar[r,"\Psi"]\ar[d,"g_\cdot",swap]&\textbf{\mod}^{\text{Gr}}(\cc_\cm^*\op)\ar[d,"g_\cdot"]\\
    \textbf{\mod}^{\text{Gr}}(\cc)\ar[r,"\Psi",swap]&\textbf{\mod}^{\text{Gr}}(\cc_\cm^*\op)
    \end{tikzcd}
\end{center}
strictly commutes for every $g\in G$. 
Indeed, for $\cn\in\textbf{\mod}^{\text{Gr}}(\cc)$ notice that both $\Psi(g_\cdot \,\cn)=\Fun_\cc(\cm,g_\cdot \,\cn)$ and $g_\cdot\Psi(\cn)=g_\cdot\Fun_\cc(\cm,\cn)$ are equal to $\Fun_\cc(\cm,\cn)$ as $\cc_\cm^*\op$-module categories. Moreover, the $G$-gradings coincide:\\
For a homogeneous functor $F\in\Fun_\cc(\cm,g_\cdot \,\cn)_h$ it holds that 
$$F(\cm_x)\subset\left[g_\cdot \,\cn\right]_{xh}=\cn_{xhg}$$
for all $x\in G$, which means that $F\in \Fun_\cc(\cm,\cn)_{hg}=\left[g_\cdot\Fun_\cc(\cm,\cn)\right]_{h}$.\\

Conversely, consider a $2$-equivalence
$$\mathcal{F}:\;\textbf{\mod}^{\text{Gr}}(\cd)\simeq\textbf{\mod}^{\text{Gr}}(\cc)$$
which is compatible with the corresponding $G$-actions. Denote by $\cm:=\mathcal{F}(\cd)$ the image under $\mathcal{F}$ of the regular graded $\cd$-module category. Then for every $g\in G$ we have an equivalence
\begin{align*}
    \End_\cd(\cd)_g=\Fun_\cd(\cd,g_\cdot\,\cd)_e\simeq\Fun_\cc(\cm,\mathcal{F}(g_\cdot\,\cd))_e
    \simeq\Fun_\cc(\cm,g_\cdot\,\cm)_e=\End_\cc(\cm)_g
\end{align*}
and thus $\mathcal{F}$ induces a graded equivalence of categories
$$\Omega:\;\End_\cd(\cd)\xrightarrow{\,\sim\,}\End_\cc(\cm)=\cc_\cm^*$$
Moreover, since $\mathcal{F}$ is a $2$-functor there is a natural isomorphism $\gamma$
\begin{center}
    \begin{tikzcd}
    \End_\cd(\cd)_g\times\End_\cd(\cd)_h\ar[d,"\mathcal{F}\times\mathcal{F}",swap]\ar[rr,"\circ"]&&\End_\cd(\cd)_{hg}\ar[dll,"\gamma",Rightarrow]\ar[d,"\mathcal{F}"]\\
    \End_\cc(\cm)_g\times\End_\cc(\cm)_h\ar[rr,"\circ",swap]&&\End_\cc(\cm)_{hg}
    \end{tikzcd}
\end{center}
which endows $\Omega$ with a monoidal structure. We therefore obtain a $G$-graded tensor equivalence
$$\cd\simeq\End_\cd(\cd)\op\simeq\End_\cc(\cm)\op=\cc_\cm^*\op$$
which means that $\cc$ and $\cd$ are graded Morita equivalent.
\end{proof}
\end{Theorem}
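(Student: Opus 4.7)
My plan is to prove both implications by constructing the equivalences explicitly, using taking Hom categories with a chosen module as the bridge between ``graded Morita'' data and ``$2$-category with $G$-action'' data. For the ``only if'' direction, I assume $\cd\simeq\cc_\cm^*\op$ as $G$-graded tensor categories for some exact $G$-graded $\cc$-module category $\cm$. The candidate $2$-equivalence is
\[
\Psi:\textbf{\mod}^{\text{Gr}}(\cc)\longrightarrow\textbf{\mod}^{\text{Gr}}(\cc_\cm^*\op),\qquad\cn\longmapsto\Fun_\cc(\cm,\cn),
\]
a graded refinement of the classical Morita $2$-equivalence \cite[Theorem 7.12.16]{EGNO}. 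Two things then need to be checked: first, that $\Fun_\cc(\cm,\cn)$ inherits a $G$-grading making it an exact $G$-graded $\cc_\cm^*\op$-module category, which follows directly from Proposition~\ref{grading_module_functors}; and second, that $\Psi$ intertwines the grading-shift $G$-actions. The key observation for the second point is that $\Psi(g_\cdot\cn)$ and $g_\cdot\Psi(\cn)$ agree as underlying $\cc_\cm^*\op$-module categories, and a homogeneous functor $F$ lies in $\Fun_\cc(\cm,g_\cdot\cn)_h$ iff $F(\cm_x)\subset(g_\cdot\cn)_{xh}=\cn_{xhg}$ iff $F\in\Fun_\cc(\cm,\cn)_{hg}=(g_\cdot\Fun_\cc(\cm,\cn))_h$, so the two gradings coincide and the induced $G$-structure on $\Psi$ is strict.

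For the ``if'' direction, given a $G$-equivariant $2$-equivalence $\mathcal{F}:\textbf{\mod}^{\text{Gr}}(\cd)\to\textbf{\mod}^{\text{Gr}}(\cc)$, I would take $\cm:=\mathcal{F}(\cd)$, the image of the regular graded module, as the candidate bimodule realizing Morita equivalence. Because the canonical graded equivalence $\cd\simeq\End_\cd(\cd)\op$ is available, it suffices to produce a $G$-graded tensor equivalence $\End_\cd(\cd)\simeq\End_\cc(\cm)=\cc_\cm^*$. Being a $2$-equivalence, $\mathcal{F}$ supplies an equivalence of the underlying endomorphism categories; its $G$-equivariance, encoded by pseudonatural isomorphisms $\gamma_g:\mathcal{F}\circ g_\cdot\xRightarrow{\sim}g_\cdot\circ\mathcal{F}$, specializes to identifications $\mathcal{F}(g_\cdot\cd)\simeq g_\cdot\cm$ and therefore to equivalences
\[
\End_\cd(\cd)_g=\Fun_\cd(\cd,g_\cdot\cd)_e\xrightarrow{\mathcal{F}}\Fun_\cc(\cm,g_\cdot\cm)_e=\End_\cc(\cm)_g
\]
for every $g\in G$, showing the resulting equivalence is grading preserving. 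The monoidal structure is read off from the pseudo-functorial coherence data $\mathcal{F}(F)\circ\mathcal{F}(H)\simeq\mathcal{F}(F\circ H)$ of $\mathcal{F}$.

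The main obstacle I anticipate lies in this reverse direction: one must verify that the monoidal structure on $\Omega:\End_\cd(\cd)\to\End_\cc(\cm)$ inherited from the pseudo-functoriality of $\mathcal{F}$, together with the $G$-structure coherence $\gamma_g$, genuinely assemble into a single $G$-graded tensor equivalence rather than into monoidal and equivariant pieces that only separately make sense. Concretely, this amounts to a coherence check that the associator-transport and the grading-shift squares commute compatibly, using the axioms of an equivariant $2$-functor; the cleanest way to organize it is probably to work with the bicategorical description of $\mathcal{F}$ and observe that the hexagon/pentagon for $\Omega$ is forced by the pentagon of $\mathcal{F}$. Once the monoidal and graded structures are shown to be mutually compatible, passing to opposites delivers the desired $G$-graded tensor equivalence $\cd\simeq\cc_\cm^*\op$, completing the proof.
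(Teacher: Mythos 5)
Your proposal is correct and follows essentially the same route as the paper: the forward direction via $\Psi(\cn)=\Fun_\cc(\cm,\cn)$ with the identical degree-shift computation showing the strict $G$-structure, and the converse by taking $\cm=\mathcal{F}(\cd)$ and assembling the graded tensor equivalence $\End_\cd(\cd)\simeq\cc_\cm^*$ from the equivariance and pseudofunctoriality data of $\mathcal{F}$. The coherence concern you flag at the end is handled in the paper at the same level of detail you propose, so no gap remains.
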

\begin{Remark}
In Theorem \ref{graded_Morita_graded_modules} it is necessary to keep the information encoded in the $G$-action on $\textbf{\mod}^{\text{Gr}}(\cc)$. If $\cc$ is a $G$-graded tensor category, any group automorphism $f\in \operatorname{Aut}(G)$, defines a new $G$-grading by $(\cc^f)_g:=\cc_{f(g)}$, where $g\in G$. Depending of the group automorphism and the graded tensor category, it is possible that the
$2$-categories $\textbf{\mod}^{\text{Gr}}(\cc)$ and $\textbf{\mod}^{\text{Gr}}(\cc^f)$ are equivalent as $2$-categories,
but not as $2$-categories with $G$-action. For instance, if $\cc=\text{Vec}_{G}^\omega$,  and $f\in \operatorname{Aut}(G)$ is such that $f^*(\omega)$  and $\omega$ are not cohomologous, then $\cc$ and $\cc^f$ are not graded Morita equivalent, but the $2$-categories $\textbf{\mod}^{\text{Gr}}(\cc)$ and $\textbf{\mod}^{\text{Gr}}(\cc^f)$ are equivalent.
\end{Remark}

The equivariantization construction has an analog for $2$-categories with group action. Let $\cc$ be a $G$-graded finite tensor category, then for $\textbf{\mod}^{\text{Gr}}(\cc)$ the $2$-category of equivariant objects corresponds to $\textbf{\mod}(\cc)$ the $2$-category of (not necessarily graded) exact module categories over $\cc$. In order to see this consider the following:\\ 

The $2$-equivalence in Theorem \ref{galindo} is given by induction of module categories and can be described in terms of the relative Deligne product \cite{ENO3}
\begin{align}\label{induction_2equivalence}
    \textbf{\mod}(\cc_e)\to\textbf{\mod}^{\text{Gr}}(\cc),\qquad    \cn\mapsto \ind_{\cc_e}^\cc(\cn)=\cc\boxtimes_{\cc_e}\cn
\end{align}
and its inverse is given by restriction
\begin{align}\label{restriction_2equivalence}
\textbf{\mod}^{\text{Gr}}(\cc)\to\textbf{\mod}(\cc_e),\qquad   \cm\mapsto \res_{\cc_e}^\cc(\cm)=\cm_e
\end{align}
Since $G$ acts on $\textbf{\mod}^{\text{Gr}}(\cc)$ we can transport this $G$-action structure to the $2$-category $\textbf{\mod}(\cc_e)$ via the $2$-equivalence (\ref{restriction_2equivalence}). We obtain for each $g\in G$ a $2$-functor
$$g_\times\,:\textbf{\mod}(\cc_e)\to\textbf{\mod}(\cc_e),\quad \cn\mapsto
\res_{\cc_e}^\cc(g_\cdot\,\ind_{\cc_e}^\cc(\cn))=\cc_g\boxtimes_{\cc_e}\cn$$
and the $\cc_e$-bimodule equivalences $M_{g,h}:\cc_g\boxtimes_{\cc_e}\cc_h\xrightarrow{\sim}\cc_{gh}$ coming from the tensor product of $\cc$ provide the corresponding pseudonatural equivalences $g_\times\circ\, h_\times\xRightarrow{\sim}gh_\times$.\\

We notice that this $G$-action coincides with the $G$-action on $\textbf{\mod}(\cc_e)$ presented in \cite[Theorem 5.4]{BGM}. Moreover, the authors also show that the $2$-category of equivariant objects in $\textbf{\mod}(\cc_e)$ under this action corresponds to $\textbf{\mod}(\cc)$.
Consequently, by considering equivariant objects in $\textbf{\mod}^{\text{Gr}}(\cc)$, one recovers the (not necessarily graded) module categories over $\cc$.
\begin{Corollary}\label{equivariantization_module_cat}
Given a $G$-graded finite tensor category $\cc$. The equivariantization $\textbf{\mod}^{\text{Gr}}(\cc)^G$ is $2$-equivalent to $\textbf{\mod}(\cc)$.
\end{Corollary}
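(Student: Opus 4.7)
The plan is to combine the induction/restriction $2$-equivalence from Theorem \ref{galindo} with the main result of \cite{BGM} on equivariantization of module $2$-categories. The key preliminary work has already been done in the paragraphs preceding the statement, where the $G$-action on $\textbf{\mod}^{\text{Gr}}(\cc)$ is transported through restriction to a $G$-action on $\textbf{\mod}(\cc_e)$ given concretely by $\cn \mapsto \cc_g \boxtimes_{\cc_e} \cn$, with coherence $g_\times \circ h_\times \Rightarrow (gh)_\times$ furnished by the multiplication equivalences $M_{g,h}: \cc_g \boxtimes_{\cc_e} \cc_h \xrightarrow{\sim} \cc_{gh}$ of the $G$-grading.

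First I would upgrade the restriction/induction $2$-equivalence of Theorem \ref{galindo} to a $2$-equivalence of $2$-categories with $G$-action. Concretely, I would check that the restriction $2$-functor $\textbf{\mod}^{\text{Gr}}(\cc) \to \textbf{\mod}(\cc_e)$, $\cm \mapsto \cm_e$, is strictly compatible with the two $G$-actions: the component $(g_\cdot \cm)_e = \cm_g$ matches $\cc_g \boxtimes_{\cc_e} \cm_e$ via the canonical $\cc_e$-module equivalence coming from the tensor action of $\cc$ on $\cm$, and this identification is exactly how the target action was defined. By the general principle that equivariantization is a $2$-functor on the $2$-category of $2$-categories with $G$-action (see \cite{BGM}), a $G$-equivariant $2$-equivalence descends to a $2$-equivalence on equivariantizations, giving
\[
\textbf{\mod}^{\text{Gr}}(\cc)^G \;\simeq\; \textbf{\mod}(\cc_e)^G.
\]

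Second, I would invoke \cite[Theorem 5.4]{BGM} directly: it identifies the equivariantization $\textbf{\mod}(\cc_e)^G$ under precisely the action $\cn \mapsto \cc_g \boxtimes_{\cc_e} \cn$ with the $2$-category $\textbf{\mod}(\cc)$ of (not necessarily graded) exact $\cc$-module categories. Composing the two $2$-equivalences yields $\textbf{\mod}^{\text{Gr}}(\cc)^G \simeq \textbf{\mod}(\cc)$.

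The only real obstacle is bookkeeping: one must be confident that the strict $G$-action on $\textbf{\mod}^{\text{Gr}}(\cc)$ defined by shifting the grading corresponds, under restriction, to the (generally only pseudo-) action on $\textbf{\mod}(\cc_e)$ used in \cite{BGM}, including coherent matching of the associators $g_\times \circ h_\times \Rightarrow (gh)_\times$ with the associativity constraints of the grading of $\cc$. Since the transported action is defined tautologically from restriction and induction, and since $M_{g,h}$ encodes both the coherence of the $G$-action on $\textbf{\mod}(\cc_e)$ and the associativity of the tensor product of $\cc$, the required matching is essentially by construction. With this observation made carefully, the corollary follows immediately from the chain of equivalences.
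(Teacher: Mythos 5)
Your proposal follows essentially the same route as the paper: transport the shift $G$-action along the induction/restriction $2$-equivalence of Theorem \ref{galindo} to the action $\cn\mapsto\cc_g\boxtimes_{\cc_e}\cn$ on $\textbf{\mod}(\cc_e)$, identify this with the action of \cite[Theorem 5.4]{BGM}, and use that result to conclude $\textbf{\mod}^{\text{Gr}}(\cc)^G\simeq\textbf{\mod}(\cc_e)^G\simeq\textbf{\mod}(\cc)$. The paper packages the compatibility you check explicitly as the tautological transport of structure, but the argument is the same.
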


\subsection{The equivariant center}
Given a $G$-graded finite tensor category $\cc$, there is a construction (see \cite{GNN} or \cite{TV}) that associates to $\cc$ a braided $G$-crossed tensor category called the \textit{equivariant center} and denoted by $\cz_G(\cc)$, whose underlying tensor category is the relative center $\cz_{\cc_e}(\cc)$ of $\cc$ with respect to the trivial component $\cc_e$.\\

On the other hand, the ordinary Drinfeld center of a $G$-graded finite tensor category $\cc$ is a braided tensor category containing $\text{Rep}(G)$, i.e., there is a fully faithful braided functor
\begin{align}\label{rep_center}
    \text{Rep}(G)\to \cz(\cc),\quad (\mathbb{K}^n,\rho)\mapsto(\mathbf{1}^n,\gamma_{-,\mathbf{1}^n})
\end{align}
where for $X\in\cc_g$ the half-braiding is defined via
$\gamma_{X,\mathbf{1}^n}:X\otimes \mathbf{1}^n\xrightarrow{\text{id}_X\otimes \rho(g)} X\otimes \mathbf{1}^n=\mathbf{1}^n\otimes X$.\\
Therefore $\cz(\cc)$ is a possible input for the de-equivariantization construction. The following result shown in \cite{GNN} states that the category of equivariant objects in $\cz_G(\cc)$ is braided equivalent to the Drinfeld center $\cz(\cc)$, and consequently the equivariant center $\cz_G(\cc)$ can be described as the de-equivariantization of $\cz(\cc)$.
\begin{Theorem}{\cite[Theorem 3.5]{GNN}}
Let $\cc$ be a $G$-graded finite tensor category, there is an equivalence of braided tensor categories
\begin{align*}
    \cz_G(\cc)^G\xrightarrow{\;\sim\;} \cz(\cc)
\end{align*}
compatible with the canonical inclusions of $\text{Rep}(G)$ given by (\ref{incl_equi}) and (\ref{rep_center}).
\end{Theorem}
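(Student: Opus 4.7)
The plan is to construct mutually inverse braided functors $\Phi: \cz_G(\cc)^G \to \cz(\cc)$ and $\Psi: \cz(\cc) \to \cz_G(\cc)^G$, and then verify compatibility with the two $\text{Rep}(G)$-inclusions (\ref{incl_equi}) and (\ref{rep_center}). Throughout I will use that the underlying tensor category of $\cz_G(\cc)$ is the relative center $\cz_{\cc_e}(\cc)$.

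The direction $\Psi: \cz(\cc) \to \cz_G(\cc)^G$ is the more illuminating starting point. Given $(X, \delta) \in \cz(\cc)$, restricting $\delta$ to objects of $\cc_e$ yields a relative half-braiding $\gamma := \delta|_{\cc_e}$, so $(X, \gamma) \in \cz_{\cc_e}(\cc) = \cz_G(\cc)$. The remaining components $\delta_{Y, X}$ for $Y$ of non-trivial homogeneous degree must be repackaged as an equivariant structure $\{u_g : T_g(X, \gamma) \xrightarrow{\sim} (X, \gamma)\}_{g \in G}$ for the $G$-action $T_g$ on $\cz_G(\cc)$ coming from its $G$-crossed braided structure. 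Concretely, for each $g \in G$ and each $Y \in \cc_g$, the slice $\delta_{Y, -}$ provides a natural candidate for $u_g$; the hexagon identity for the full Drinfeld half-braiding $\delta$ over all of $\cc$ forces these candidates to be independent of the choice of $Y$ and to satisfy $u_{gh} = u_g \circ T_g(u_h)$, so they assemble into a genuine $G$-equivariant structure.

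Conversely, for $\Phi: \cz_G(\cc)^G \to \cz(\cc)$, given $((X, \gamma), \{u_g\}_{g \in G})$, extend $\gamma$ to a full half-braiding $\delta$ with all of $\cc$ by using the isomorphisms $u_g$ to build $\delta_{Y, X}$ for $Y \in \cc_g$. The full hexagon axiom for $\delta$ then reduces to the compatibility of $\{u_g\}$ with the $G$-crossed braiding of $\cz_G(\cc)$ together with the cocycle condition, so this is well defined. Checking $\Phi \circ \Psi \cong \mathrm{id}$ and $\Psi \circ \Phi \cong \mathrm{id}$ is essentially tautological once the encoding is fixed. To see that $\Phi$ is braided, compare the braiding on the equivariantization (\ref{equivariant_braiding}) with the standard braiding on $\cz(\cc)$: the extra factor $u_g$ appearing in (\ref{equivariant_braiding}) is precisely the non-$\cc_e$ part of the Drinfeld half-braiding, so the two braidings agree under $\Phi$. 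Compatibility with (\ref{incl_equi}) and (\ref{rep_center}) is then immediate on unit-multiples $\mathbf{1}^n$, where both constructions recover the same $G$-representation.

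The hard part, and the main obstacle, is that the $G$-action $T_g$ on $\cz_{\cc_e}(\cc)$ is not given by any naive conjugation formula, since a faithfully $G$-graded finite tensor category need not contain canonical invertible objects in the homogeneous components $\cc_g$. One therefore has to work with the abstract $G$-crossed braided structure on the equivariant center and match, component by component, the slice $\delta|_{\cc_g}$ of an honest Drinfeld half-braiding with the data of an equivariant isomorphism $u_g$. Verifying that the resulting natural isomorphisms simultaneously obey the hexagon axioms over all of $\cc$, the coherence conditions for an equivariant object, and are preserved by the braidings on both sides is the technical heart of the argument; the description in Remark \ref{central_ext_braided_crossed} of $\cz_G(\cc)$ as a central $G$-extension of $\cz(\cc_e)$, combined with the general de-equivariantization correspondence, is what ultimately makes the bookkeeping manageable.
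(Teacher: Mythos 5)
Note first that the paper does not prove this statement at all: it is quoted verbatim from \cite[Theorem 3.5]{GNN}, so there is no in-paper argument to compare against, and your proposal has to stand on its own as a proof of the cited result. Your overall strategy (restrict a Drinfeld half-braiding to $\cc_e$ to land in $\cz_{\cc_e}(\cc)$, and repackage the homogeneous slices $\delta|_{\cc_g}$ as an equivariant structure, inverting this to extend a relative half-braiding by means of the $u_g$) is indeed the standard strategy behind the GNN proof, and your identification of the braiding comparison via (\ref{equivariant_braiding}) is the right picture.

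However, there is a genuine gap exactly at the point you yourself flag as "the technical heart", and it is not filled. An equivariant structure is a family of isomorphisms $u_g\colon T_g(X,\gamma)\to (X,\gamma)$ in $\cz_G(\cc)$, so to produce $u_g$ from $\delta$ (or to build $\delta_{Y,X}$ for $Y\in\cc_g$ from $u_g$) you must have the explicit $G$-action $T_g$ on $\cz_{\cc_e}(\cc)$ in hand. The slice $\delta_{Y,X}\colon Y\ot X\to X\ot Y$ is not a morphism $T_g(X)\to X$, and the assertion that the hexagon "forces these candidates to be independent of the choice of $Y$" is not even well formed: what the hexagon gives is naturality in $Y$, and converting a family natural in $Y\in\cc_g$ into a single isomorphism with source $T_g(X,\gamma)$ requires the concrete construction of $T_g$ (in \cite{GNN} this is done via the $\cc_e$-bimodule structure of the components $\cc_g$; in general one needs a relative-Deligne-product or coend-type description), together with a verification that the resulting $u_g$ satisfy the equivariance cocycle and that the correspondence is compatible with the $G$-braiding. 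That verification is the entire content of the theorem, and your proposal defers it rather than performs it. The closing appeal to Remark \ref{central_ext_braided_crossed} and the equivariantization/de-equivariantization $2$-equivalence does not repair this: to run that route you would have to prove independently that the de-equivariantization $\cz(\cc)_G$ is equivalent, \emph{as a braided $G$-crossed category with the specific GNN structure}, to $\cz_G(\cc)$ --- which is essentially the same comparison of $G$-actions you are trying to avoid --- and within the logic of this paper the de-equivariantization description of $\cz_G(\cc)$ is presented as a consequence of the very theorem being proved, so invoking it here without an independent argument is circular.
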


\begin{Proposition}\label{equivariant_centers_eq}
Let $\cc$ be a $G$-graded finite tensor category and $\cm=\mod_A(\cc)$ an exact indecomposable $G$-graded $\cc$-module category, where $A$ is an algebra in $\cc_e$. Then there is an equivalence  
\begin{align}
   \cz_G(\cc)\xrightarrow{\;\sim\;} \cz_G({}_A\bimod_A(\cc)) \xrightarrow{\;\sim\;}\cz_G(\cc_\cm^{*}\op)
\end{align}
of braided $G$-crossed tensor categories.
\begin{proof}
First, notice that the following diagram commutes:
\begin{equation}\label{S_RepG}
 \begin{tikzcd}[column sep=large]
        &&\cz(\cc)\ar[dd,"S"]\\
 \text{Rep}(G)\ar[urr,"(\ref{rep_center})"]\ar[drr,"(\ref{rep_center})",swap]&&\\
        &&\cz({}_A\bimod_A(\cc))
 \end{tikzcd}
\end{equation}
where $S$ refers to equivalence (\ref{schau_equivalence}). Explicitly we have for the $G$-graded tensor category ${}_A\bimod_A(\cc)$ that the functor (\ref{rep_center}) is given by
$$\text{Rep}(G)\to\cz({}_A\bimod_A(\cc)),
\quad (\K^n,\rho)\mapsto(A\otimes \mathbf{1}^n,\beta_{\_,A^n})$$
where for $N\in{}_A\bimod_A(\cc_g)$, the half-braiding $\beta_{N,A^n}$ is given by
$$N\otimes_AA\otimes \mathbf{1}^n\xrightarrow{\text{id}_N\otimes_A\text{id}_A\otimes\rho(g)}N\otimes_AA\otimes \mathbf{1}^n\cong N\otimes \mathbf{1}^n= \mathbf{1}^n\otimes N\cong\mathbf{1}^n\otimes A\otimes_AN=A\otimes \mathbf{1}^n\otimes_AN$$
On the other hand, composing $S$ with the functor (\ref{rep_center}) corresponding to $\cc$ leads to
$$(\mathbf{1}^n,\gamma_{-,\mathbf{1}^n})\mapsto(A\otimes \mathbf{1}^n, \delta_{\_,A^n})$$
where for $M\in {}_A\bimod_A(\cc_g)$, the half-braiding $\delta_{M,A^n}$ is given by the composition
$$M\otimes_AA\otimes \mathbf{1}^n\cong M\otimes \mathbf{1}^n\xrightarrow{\text{id}_M\otimes\rho(g)}M\otimes \mathbf{1}^n=\mathbf{1}^n\otimes M\cong \mathbf{1}^n\otimes A\otimes_A M\xrightarrow{\gamma^{-1}_{A,\mathbf{1}^n}\otimes_A\text{id}_M}A\otimes\mathbf{1}^n\otimes_A M$$
but since $A\in\cc_e$, then $\gamma_{A,\mathbf{1}^n}=\text{id}_A\otimes \rho(e)=\text{id}_{A^n}$ and thus the half-braidings $\beta$ and $\delta$ coincide, which implies the commutativity of the diagram (\ref{S_RepG}).
It follows by applying de-equivariantization, that Schauenburg's equivalence (\ref{schau_equivalence}) induces an equivalence of braided $G$-crossed tensor categories
\begin{align*}
   \cz_G(\cc)\xrightarrow{\;\sim\;} \cz_G({}_A\bimod_A(\cc))
\end{align*}
Similarly, from Proposition \ref{EW_graded} the Eilenberg-Watts equivalence is graded, but the construction of the inclusion (\ref{rep_center}) is determined by the $G$-grading, therefore the diagram
\begin{center}
 \begin{tikzcd}[column sep=large]
        &&\cz({}_A\bimod_A(\cc))\ar[dd]\\
 \text{Rep}(G)\ar[urr,"(\ref{rep_center})"]\ar[drr,"(\ref{rep_center})",swap]&&\\
        &&\cz(\cc_\cm^*\op)
 \end{tikzcd}
\end{center}
is commutative and by applying de-equivariantization, we conclude that $\cz_G({}_A\bimod_A(\cc))$ and $\cz_G(\cc_\cm^{*}\op)$ are equivalent as braided $G$-crossed tensor categories as well.
\end{proof}
\end{Proposition}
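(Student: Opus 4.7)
The proof strategy is to reduce both equivalences to the non-equivariant versions that are already available, namely Schauenburg's braided equivalence $S:\cz(\cc)\xrightarrow{\sim}\cz({}_A\bimod_A(\cc))$ from Theorem \ref{schau_equivalence_th} and the Eilenberg-Watts braided equivalence ${}_A\bimod_A(\cc)\simeq\cc_\cm^*\op$ together with its induced equivalence on Drinfeld centers. Since the equivariant center is the de-equivariantization of the ordinary Drinfeld center (via the inclusion (\ref{rep_center})), it will suffice to show that these braided equivalences commute with the canonical $\text{Rep}(G)$ inclusions, so that de-equivariantization produces braided $G$-crossed equivalences.

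For the first equivalence, the main task is to verify that the triangle (\ref{S_RepG}) commutes, i.e.\ that $S$ transforms the composite $\text{Rep}(G)\to\cz(\cc)$ into the inclusion $\text{Rep}(G)\to\cz({}_A\bimod_A(\cc))$ associated with the natural grading on ${}_A\bimod_A(\cc)$ provided by Proposition \ref{modules_graded}. Starting with $(\K^n,\rho)\in\text{Rep}(G)$, both paths land on the underlying object $A\otimes\mathbf{1}^n$; what must be compared are the two half-braidings. The one coming from $\cz({}_A\bimod_A(\cc))$ directly is a twist by $\rho(g)$ on homogeneous bimodules of degree $g$, while the one transported through $S$ picks up an extra factor of $\gamma^{-1}_{A,\mathbf{1}^n}$ coming from the Schauenburg formula. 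Here is where the assumption $A\in\cc_e$ is decisive: the half-braiding $\gamma_{A,\mathbf{1}^n}$ specializes to $\text{id}_A\otimes\rho(e)=\text{id}$, so the two half-braidings coincide. Invoking item (\ref{eq_deeq_equivalence2}) of the equivariantization/de-equivariantization correspondence, de-equivariantizing $S$ then yields an equivalence of braided $G$-crossed tensor categories $\cz_G(\cc)\xrightarrow{\sim}\cz_G({}_A\bimod_A(\cc))$.

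The second equivalence is almost immediate. By Proposition \ref{EW_graded} the Eilenberg-Watts equivalence ${}_A\bimod_A(\cc)\simeq\cc_\cm^*\op$ is a $G$-graded tensor equivalence, hence it induces a braided equivalence $\cz({}_A\bimod_A(\cc))\simeq\cz(\cc_\cm^*\op)$. Since the inclusion (\ref{rep_center}) is defined purely in terms of the grading and the monoidal unit, any grading-preserving tensor equivalence is automatically compatible with it. Therefore the analogous commutative triangle holds on the nose, and de-equivariantization delivers a braided $G$-crossed equivalence $\cz_G({}_A\bimod_A(\cc))\xrightarrow{\sim}\cz_G(\cc_\cm^*\op)$.

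The main obstacle is the first step: showing that Schauenburg's equivalence $S$ respects the $\text{Rep}(G)$ inclusions. This is not a formal consequence and genuinely uses the fact that $A$ sits in the trivial component $\cc_e$; otherwise the mismatch between $\beta$ and $\delta$ would be nontrivial and the de-equivariantization of $S$ would not directly compute $\cz_G({}_A\bimod_A(\cc))$. The remainder of the argument is a routine composition of known equivalences once this compatibility is established.
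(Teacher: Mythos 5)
Your proposal is correct and follows essentially the same route as the paper: you verify the commutativity of the $\text{Rep}(G)$-triangle for Schauenburg's equivalence by comparing the two half-braidings and using $A\in\cc_e$ to force $\gamma_{A,\mathbf{1}^n}=\mathrm{id}$, then de-equivariantize, and you handle the second equivalence via the graded Eilenberg--Watts equivalence and the grading-determined nature of the inclusion (\ref{rep_center}), exactly as in the paper's proof.
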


\begin{Remark}\label{S_G}
Given a $G$-graded finite tensor category $\cc$ the equivalence from Proposition \ref{equivariant_centers_eq} makes the following diagram commute
\begin{center}
    \begin{tikzcd}
    \cz(\cc)\ar[rr,"S"]\ar[d]&& \cz({}_A\bimod_A(\cc))\ar[d]\\
    \cz_G(\cc)\ar[rr,"S_G"]&& \cz_G({}_A\bimod_A(\cc))
    \end{tikzcd}
\end{center}
where the vertical arrows are the forgetful functors mentioned in Remark \ref{C_exact_over_center} (i) and $S$ is equivalence (\ref{schau_equivalence}). More explicitly $S_G$ is given by the assignment $(X,\gamma_{\_,X})\longmapsto (A\ot X,\delta_{\_,A\ot X})$,
where for $M\in{}_A\bimod_A(\cc_e)$, the half-braiding $\delta_{\_,A\otimes X}$ is defined by the composition
$$M\otimes_AA\otimes X\cong M\otimes X\xrightarrow{\gamma_{M,X}}X\otimes M\cong X\otimes A\otimes_A M\xrightarrow{\gamma^{-1}_{A,X}\otimes_AM} A\otimes X\otimes_AM$$
\end{Remark}

In particular Proposition \ref{equivariant_centers_eq} implies the following extension of Theorem \ref{schau_equivalence_th} which corresponds to the case of the trivial group:

\begin{Theorem}\label{equivalent_equivariant_centers}
If two $G$-graded finite tensor categories $\cc$ and $\cd$ are graded Morita equivalent, then their equivariant centers $\cz_G(\cc)$ and $\cz_G(\cd)$ are equivalent as braided $G$-crossed tensor categories.
\end{Theorem}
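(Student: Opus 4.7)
The statement is essentially a direct corollary of Proposition \ref{equivariant_centers_eq}, so the plan is mainly to spell out how to reduce to that proposition.

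First I would rewrite graded Morita equivalence in terms of bimodules: by Remark \ref{remark_eq_relation}(iv) together with Corollary \ref{exact__graded_module_algebra} and the Eilenberg--Watts equivalence in its graded form (Proposition \ref{EW_graded}), the hypothesis that $\cc$ and $\cd$ are graded Morita equivalent is equivalent to the existence of an exact algebra $A$ in the trivial component $\cc_e$ together with a $G$-graded tensor equivalence $\cd\simeq {}_A\bimod_A(\cc)$. Put $\cm:=\mod_A(\cc)$, so that this is the Morita-realizing module category and $\cc_\cm^*\op\simeq {}_A\bimod_A(\cc)$ as $G$-graded tensor categories.

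Next I would invoke Proposition \ref{equivariant_centers_eq} directly, which furnishes an equivalence
\[
\cz_G(\cc)\xrightarrow{\;\sim\;}\cz_G({}_A\bimod_A(\cc))\xrightarrow{\;\sim\;}\cz_G(\cc_\cm^*\op)
\]
of braided $G$-crossed tensor categories. The given graded tensor equivalence $\cd\simeq \cc_\cm^*\op$ induces an equivalence of equivariant centers $\cz_G(\cd)\simeq \cz_G(\cc_\cm^*\op)$ of braided $G$-crossed tensor categories. Composing these yields the desired equivalence $\cz_G(\cc)\simeq \cz_G(\cd)$ of braided $G$-crossed categories.

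The only point that requires brief justification is the functoriality statement just used, namely that a $G$-graded tensor equivalence $F\colon \cd\xrightarrow{\sim}\cc_\cm^*\op$ induces an equivalence of braided $G$-crossed tensor categories between their equivariant centers. This is routine: $F$ preserves the grading and the tensor product, so the pair $(X,\gamma)\mapsto (F(X),F(\gamma))$ defines an equivalence $\cz_{\cd_e}(\cd)\xrightarrow{\sim}\cz_{\cc_\cm^*\op_e}(\cc_\cm^*\op)$ on underlying tensor categories, while the $G$-grading, the $G$-action (given on objects of the relative center by conjugation by half-braidings as in \cite{GNN}), and the $G$-braiding are all built intrinsically from the grading, tensor product, and half-braidings, which $F$ preserves. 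I do not expect any real obstacle here; the whole argument is essentially a package-up of Proposition \ref{equivariant_centers_eq} plus the obvious naturality of the $\cz_G(-)$-construction under graded tensor equivalences.
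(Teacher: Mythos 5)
Your argument is correct and follows the paper's own route: the paper derives Theorem \ref{equivalent_equivariant_centers} directly from Proposition \ref{equivariant_centers_eq} (via Corollary \ref{exact__graded_module_algebra} and Proposition \ref{EW_graded}), exactly as you do, with the transport along the given graded tensor equivalence $\cd\simeq\cc_\cm^*\op$ left implicit as the routine functoriality of $\cz_G(-)$ that you spell out.
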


\section{Characterization of graded Morita equivalence}\label{sec_5}
Morita equivalence of finite tensor categories can be completely detected by braided equivalence of their Drinfeld centers. In the graded case an analogous result will be proven in Theorem \ref{charac_graded_Morita_eq} for the equivariant center. To this end, we need to prove the converse of Theorem \ref{equivalent_equivariant_centers} and we will closely follow the approach in \cite[Section 8.12]{EGNO} for the non-graded case.\\

Given a $G$-graded finite tensor category $\cc$, the forgetful functor
\begin{equation}\label{forgetful}
    F:\,\cz_G(\cc)\to\cc,\quad (X,\gamma)\mapsto X
\end{equation}
endows $\cc$ with the structure of an exact $G$-graded module category over $\cz_G(\cc)$, according to Remark \ref{C_exact_over_center} (iv).\\

The functor $\innhom(\textbf{1},-):\cc\to\cz_G(\cc)$ is right adjoint to $F$: From the definition of the internal Hom, given $Z\in\cz_G(\cc)$ and $X\in\cc$
\begin{align}\label{adjunction}
    \Hom_\cc(F(Z),X)=\Hom_\cc(Z\ot\textbf{1},X)\cong\Hom_{\cz_G(\cc)}(Z,\innhom(\textbf{1},X))
\end{align}
where in the first equality it has been considered that $\cz_G(\cc)$ acts on $\cc$ via $F$.\\

Notice that the adjunction (\ref{adjunction}) is a special case of (\ref{inn_hom_adjunction}) for the exact $G$-graded $\cz_G(\cc)$-module category $\cc$. Denote by $\xi_X:=\text{ev}_{\textbf{1},X}: \innhom(\textbf{1},X)\ot\textbf{1}\to X$ the counit morphism (\ref{ev_MN}). Now consider for every $X\in\cc$ the following morphisms:
$$\sigma_X:=\circ_{\textbf{1},\textbf{1},X}:\innhom(\textbf{1},X)\otimes\innhom(\textbf{1},\textbf{1})\to\innhom(\textbf{1},X)$$
i.e., the image of the composition
$$\innhom(\textbf{1},X)\ot\innhom(\textbf{1},\textbf{1})\ot\textbf{1}\xrightarrow{\text{id}\otimes\xi_{\textbf{1}}}
\innhom(\textbf{1},X)\ot\textbf{1}\xrightarrow{\xi_X} X$$
under the adjunction (\ref{adjunction}), and define
$$\rho_X:\innhom(\textbf{1},\textbf{1})\otimes\innhom(\textbf{1},X)\to\innhom(\textbf{1},X)$$
as the image of the composition
$$\innhom(\textbf{1},\textbf{1})\otimes\textbf{1}\otimes\innhom(\textbf{1},X)\xrightarrow{\xi_{\textbf{1}}\otimes \text{id}}\textbf{1}\otimes\innhom(\textbf{1},X)\cong \innhom(\textbf{1},X)\otimes\textbf{1}\xrightarrow{\xi_X} X$$
under the adjunction (\ref{adjunction}), where monoidal units should be inserted and removed where necessary.\\

From Remark \ref{inn_hom_algebra} we know that
\begin{enumerate}[(i)]
    \item $A:=(\innhom(\textbf{1},\textbf{1}),m:=\sigma_{\textbf{1}},u_\textbf{1})$ is an algebra in $\cz_G(\cc)$. Moreover, $A$ is in the trivial component $\cz_G(\cc)_e$ according to Proposition \ref{graded_innhom}.
    \item $R(X):=(\innhom(\textbf{1},X),\sigma_X)$ is a right $A$-module in $\cz_G(\cc)$, for every $X\in\cc$.
    \item  From Theorem \ref{modulecat_algebra_eq} the assignment $R:\cc\longrightarrow\mod_A(\cz_G(\cc)),\; X\mapsto R(X)$ is an equivalence of $\cz_G(\cc)$-module categories, and from Proposition \ref{graded_innhom} it is $G$-graded.
\end{enumerate}

\begin{Proposition}
\begin{enumerate}[(a)]
\item The algebra $A$ is commutative in $\cz_G(\cc)_e=\cz(\cc_e)$.
\item For every $X\in\cc$, the morphism $\rho_X:A\otimes R(X)\to R(X)$ coincides with the composition 
$$A\otimes R(X)\xrightarrow{c_{A,R(X)}}R(X)\otimes A\xrightarrow{\sigma_X}R(X)$$
providing a structure of an $A$-bimodule in $\cz_G(\cc)$ on $R(X)$.
\end{enumerate}
\begin{proof}
Given $X\in\cc$, the following diagram commutes due to the naturality of the braiding
\begin{equation}\label{diagram}
\begin{tikzcd}
\innhom(\textbf{1},\textbf{1})\ot\innhom(\textbf{1},X)\ar[dd,"c_{A,R(X)}",swap]\ar[r,"\xi_{\textbf{1}}\otimes id"]&\textbf{1}\ot\innhom(\textbf{1},X)\ar[dd,"c_{\textbf{1},R(X)}"]\ar[dr,"\sim"]&&\\
&&\innhom(\textbf{1},X)\ot\textbf{1}\ar[r,"\xi_X"]&X\\
\innhom(\textbf{1},X)\ot\innhom(\textbf{1},\textbf{1})\ar[r,"id\otimes \xi_{\textbf{1}}",swap]&\innhom(\textbf{1},X)\ot\textbf{1}\ar[ru,"=",swap]&&
\end{tikzcd}
\end{equation}

From the definition of $\sigma_X$ and $\rho_X$, the commutativity of the diagram (\ref{diagram}) translates to $\sigma_X\circ c_{A,R(X)}=\rho_X$ under the isomorphism
$$\Hom_\cc(\innhom(\textbf{1},\textbf{1})\otimes \innhom(\textbf{1},X),X)\cong\Hom_{\cz_G(\cc)}(\innhom(\textbf{1},\textbf{1})\otimes \innhom(\textbf{1},X),\innhom(\textbf{1},X))$$
coming from the adjunction (\ref{adjunction}). The case $X=\textbf{1}$ corresponds to commutativity of the algebra $A=\innhom(\textbf{1},\textbf{1})$.
\end{proof}
\end{Proposition}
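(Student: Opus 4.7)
The plan is to prove (b) first by unraveling both sides across the adjunction (\ref{adjunction}), and then to deduce (a) by specializing (b) to $X=\textbf{1}$ and comparing with $m$ directly.

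For (b), both $\rho_X$ and $\sigma_X\circ c_{A,R(X)}$ are morphisms $A\otimes R(X)\to R(X)$ in $\cz_G(\cc)$, hence are uniquely determined by the composition with $\xi_X$ regarded as morphisms $A\otimes R(X)\otimes\textbf{1}\to X$ in $\cc$. Unraveling the defining formula of $\sigma_X$, the composite attached to $\sigma_X\circ c_{A,R(X)}$ is the ``outer'' path
\[
A\otimes R(X)\otimes\textbf{1}\xrightarrow{c_{A,R(X)}\otimes\text{id}_{\textbf{1}}} R(X)\otimes A\otimes\textbf{1}\xrightarrow{\text{id}\otimes\xi_{\textbf{1}}} R(X)\otimes\textbf{1}\xrightarrow{\xi_X} X,
\]
while the composite attached to $\rho_X$ is, by its definition, the ``inner'' path
\[
A\otimes R(X)\otimes\textbf{1}\cong A\otimes\textbf{1}\otimes R(X)\xrightarrow{\xi_{\textbf{1}}\otimes\text{id}} \textbf{1}\otimes R(X)\cong R(X)\otimes\textbf{1}\xrightarrow{\xi_X} X.
\]
Equality of these two paths is precisely the commutativity of the diagram stated in the setup, which follows from naturality of the half-braiding $c_{A,-}$ of $A\in\cz_G(\cc)_e=\cz(\cc_e)$ applied to $\xi_{\textbf{1}}:A\otimes\textbf{1}\to\textbf{1}$ (a morphism in $\cc$ with respect to which the half-braiding is natural), together with the coherence identification of $c_{A,\textbf{1}}$ and $c_{\textbf{1},R(X)}$ with the canonical unitor isomorphisms.

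For (a), specializing (b) to $X=\textbf{1}$ yields $\rho_{\textbf{1}}=m\circ c_{A,A}$. I then check $\rho_{\textbf{1}}=m$ directly: both morphisms $A\otimes A\to A$ correspond under (\ref{adjunction}) to the same morphism $A\otimes A\otimes\textbf{1}\to\textbf{1}$ in $\cc$, namely $\xi_{\textbf{1}}\otimes\xi_{\textbf{1}}$ followed by the canonical unitor $\textbf{1}\otimes\textbf{1}\cong\textbf{1}$. Indeed, the two possible orders of applying $\xi_{\textbf{1}}$ to the two $A$-factors both coincide with $\xi_{\textbf{1}}\otimes\xi_{\textbf{1}}$ by functoriality of $\otimes$. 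Combining these identifications gives $m=m\circ c_{A,A}$, which is precisely the commutativity of $A$ as an algebra in $\cz(\cc_e)$.

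The main obstacle is pinpointing the correct naturality square in (b): one must use that $c_{A,-}$ is natural with respect to \emph{arbitrary} morphisms in $\cc$ (which holds since $A\in\cz(\cc_e)$ carries a genuine half-braiding) and that coherence with the unit object collapses $c_{A,\textbf{1}}$ and $c_{\textbf{1},R(X)}$ to canonical unitors. Once (b) is established, part (a) follows formally from the short adjoint computation above.
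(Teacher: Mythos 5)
Your overall route is the paper's: transport both $\rho_X$ and $\sigma_X\circ c_{A,R(X)}$ through the adjunction (\ref{adjunction}) to morphisms $A\ot R(X)\ot\textbf{1}\to X$, compare the two composites, and deduce (a) from the case $X=\textbf{1}$; your explicit check that $\rho_{\textbf{1}}$ and $m$ have the same adjunct (via the interchange law) even spells out a step the paper leaves implicit.

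The one genuine problem is how you justify the central commuting square. In $\cz_G(\cc)=\cz_{\cc_e}(\cc)$ the morphism $c_{A,R(X)}$ is \emph{not} produced by a half-braiding carried by $A$: since $A\in\cz_G(\cc)_e=\cz(\cc_e)$, its half-braiding is only defined against objects of $\cc_e$, while $R(X)=\innhom(\textbf{1},X)$ lies in $\cc_g$ when $X\in\cc_g$ (Proposition \ref{graded_innhom}), so for nontrivial $g$ there is no ``braiding of $A$ past $R(X)$'' coming from $A$ at all. By definition of the relative braiding, $c_{A,R(X)}$ is the half-braiding of $R(X)$, as an object of the relative center, evaluated at the underlying object of $A\in\cc_e$. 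Hence the naturality you must invoke is naturality of $R(X)$'s half-braiding in its $\cc_e$-variable, applied to $\xi_{\textbf{1}}\colon A\to\textbf{1}$ --- a legitimate input because $A,\textbf{1}\in\cc_e$ and $\cc_e\subset\cc$ is full, so $\xi_{\textbf{1}}$ is a morphism in $\cc_e$ (it need not be, and is not required to be, a morphism in $\cz(\cc_e)$) --- together with the unit axiom identifying $c_{\textbf{1},R(X)}$ with the unitor. Your stated justification (``naturality of the half-braiding $c_{A,-}$ of $A$ applied to $\xi_{\textbf{1}}$, natural with respect to arbitrary morphisms in $\cc$'') does not yield the required square: naturality of $c_{A,-}$ in its second slot applied to $\xi_{\textbf{1}}$ would relate $c_{A,A}$ with $c_{A,\textbf{1}}$, not $c_{A,R(X)}$ with $c_{\textbf{1},R(X)}$, and naturality against arbitrary morphisms of $\cc$ is simply not available for $A$'s half-braiding. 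With the naturality re-attributed as above, your argument coincides with the paper's proof.
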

In particular since $A$ is a commutative algebra in the trivial component of $\cz_G(\cc)$, it follows from Proposition \ref{modules_graded} that $\mod_A(\cz_G(\cc))$ is a $G$-graded tensor category. Now the goal is to define a tensor structure on the graded functor $R:\cc\rightarrow\mod_A(\cz_G(\cc))$. It is important to point out that the construction of such tensor structure does not involve the grading on $\cc$, and thus is reduced to the non-graded case. Define for $X,Y\in\cc$ a morphism
$$\varphi_{X,Y}:\innhom(\textbf{1},X)\otimes\innhom(\textbf{1},Y)\to\innhom(\textbf{1},X\ot Y)$$
as the image of $\xi_X\otimes\xi_Y:\innhom(\textbf{1},X)\otimes\innhom(\textbf{1},Y)\to X\ot Y$ under (\ref{adjunction}). A direct computation shows that $\varphi_{X,Y}$ is a cone under the coequalizer diagram 
\begin{equation}
\begin{tikzcd}
 R(X)\otimes A\otimes R(Y)\ar[rr,shift right=.75ex,"\text{id}\otimes \rho_Y",swap]\ar[rr,shift left=.75ex,"\sigma_X\otimes \text{id}"]&&R(X)\otimes R(Y)\ar[r]\ar[dr,"\varphi_{X,Y}",swap]& R(X)\otimes_A R(Y)\ar[d,"\widetilde{\varphi}_{X,Y}",dashed]\\
 &&&R(X\otimes Y)
\end{tikzcd}
\end{equation}
and moreover one can check that the morphisms $\widetilde{\varphi}_{X,Y}:R(X)\otimes_A R(Y)\to R(X\otimes Y)$, given by the universal property of the coequalizer, fulfill the axioms of a weak tensor structure on $R$.
Furthermore, $\widetilde{\varphi}_{X,Y}:R(X)\otimes_A R(Y)\to R(X\otimes Y)$ is an isomorphism for every $X,Y\in\cc$:
\begin{enumerate}[(i)]
    \item For an object $Z\in\cz_G(\cc)$ the canonical isomorphism (\ref{can_inn_hom_iso}) provides an isomorphism of $A$-modules $R(F(Z))=\innhom(\textbf{1},Z\ot\textbf{1})\cong Z\ot\innhom(\textbf{1},\textbf{1})=Z\ot A$, where $F$ is the forgetful functor (\ref{forgetful}) and in this case $\widetilde{\varphi}_{F(Z),Y}$ corresponds to the isomorphism
    $$ R(F(Z))\otimes_A R(Y)\cong Z\ot \innhom(\textbf{1},Y)\cong\innhom(\textbf{1},Z\ot Y)= R(F(Z)\otimes Y)$$
    where the second isomorphism comes from (\ref{can_inn_hom_iso}) once more.
    \item Every projective object $P\in\cc$ is a direct summand of an object of the form $F(Z)$: Since $F$ is surjective there are $Z\in\cz_G(\cc)$ and $W\in\cc$, such that $P$ is a subobject of $W$ and $W$ is a quotient of $F(Z)$. Now from \cite[Proposition 6.1.3]{EGNO} $P$ is injective, then $P$ is a direct summand of $W$ and therefore a quotient of $F(Z)$. But from projectivity of $P$ it follows that $P$ is a direct summand of $F(Z)$.
    \item For every projective object $P\in\cc$ the morphism $\widetilde{\varphi}_{P,Y}$ is an isomorphism: From (ii) there exists $Z\in\cz_G(\cc)$ with $F(Z)=P\oplus T$ for some $T\in\cc$ and thus $\widetilde{\varphi}_{F(Z),Y}=\widetilde{\varphi}_{P,Y}\oplus \widetilde{\varphi}_{T,Y}$. It follows that if $\widetilde{\varphi}_{P,Y}$ is not an isomorphism, then $\widetilde{\varphi}_{F(Z),Y}$ is not an isomorphism, but this is a contradiction with (i).
    \item For an arbitrary $X\in\cc$, the morphism $\widetilde{\varphi}_{X,Y}$ is an isomorphism: Consider a projective cover $p:P\to X$. By naturality of $\widetilde{\varphi}_{X,Y}$ the following diagram commutes
    \begin{center}
    \begin{tikzcd}
    R(P)\otimes_AR(Y)\ar[r,"\widetilde{\varphi}_{P,Y}"]\ar[d,swap,"R(p)\otimes_A\text{id}"]
    &R(P\ot Y)\ar[d,"R(p\otimes\text{id})"]\\
    R(X)\otimes_AR(Y)\ar[r,"\widetilde{\varphi}_{X,Y}",swap]&R(X\ot Y)
    \end{tikzcd}    
    \end{center}
    Now from (iii) the top arrow $\widetilde{\varphi}_{P,Y}$ is an isomorphism, and since $p$ is epic and $R$ and $\otimes$ are exact, then $R(p\otimes\text{id})$ is epic and thus $\widetilde{\varphi}_{X,Y}$ has to be epic as well. An analogous argument using an injective hull of $X$, shows that $\widetilde{\varphi}_{X,Y}$ is also mono.
\end{enumerate}

\begin{Lemma}\label{lemma_algebra}
Let $\cc$ be a $G$-graded finite tensor category, then there exists a commutative algebra $A$ in $\cz_G(\cc)_e=\cz(\cc_e)$ and an equivalence $\cc\simeq\mod_A(\cz_G(\cc))$ of $G$-graded tensor categories.
\begin{proof}
Follows considering the construction given above.
\end{proof}
\end{Lemma}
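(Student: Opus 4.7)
The statement essentially collates all the structure assembled in the several paragraphs preceding it, so the plan is to organize the construction into a concise assembly rather than to introduce new ingredients.

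The plan is to define the candidate algebra and functor, and then verify each required property using the results already established. First, I would take $A := \innhom(\mathbf{1},\mathbf{1})$, the internal Hom computed with respect to the exact $G$-graded $\cz_G(\cc)$-module structure on $\cc$ induced by the forgetful functor $F:\cz_G(\cc)\to\cc$ from (\ref{forgetful}) (this exactness is Remark \ref{C_exact_over_center}(iv)). The fact that $A$ is an algebra comes from Remark \ref{inn_hom_algebra}(i); that it is commutative is part (a) of the proposition immediately preceding the lemma; and that it lies in $\cz_G(\cc)_e=\cz(\cc_e)$ follows from Proposition \ref{graded_innhom} applied to $M=N=\mathbf{1}\in\cm_e$ (here $\cm=\cc$ viewed as a graded $\cz_G(\cc)$-module category).

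Next, I would take as the candidate equivalence the functor
\[
R:\cc\longrightarrow\mod_A(\cz_G(\cc)),\qquad X\longmapsto\bigl(\innhom(\mathbf{1},X),\sigma_X\bigr).
\]
That $R$ is an equivalence of $\cz_G(\cc)$-module categories is Theorem \ref{modulecat_algebra_eq} together with the fact that $\cc$ is an indecomposable exact $\cz_G(\cc)$-module category (Remark \ref{C_exact_over_center}(iv) plus surjectivity of $F$ from Remark \ref{C_exact_over_center}(i)). That $R$ preserves the $G$-grading follows from Proposition \ref{graded_innhom}: if $X\in\cc_g$, then $\innhom(\mathbf{1},X)\in\cz_G(\cc)_g$, matching the grading on $\mod_A(\cz_G(\cc))$ described in Proposition \ref{modules_graded}(i) (applied to the $G$-graded tensor category $\cz_G(\cc)$ and the commutative algebra $A\in\cz_G(\cc)_e$; Proposition \ref{modules_graded}(iii), using that $\cz_G(\cc)$ is a central $G$-extension of $\cz(\cc_e)$, endows $\mod_A(\cz_G(\cc))$ with a $G$-graded tensor category structure).

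Finally, I would upgrade $R$ to a tensor equivalence. The weak tensor structure $\widetilde{\varphi}_{X,Y}:R(X)\otimes_A R(Y)\to R(X\otimes Y)$ has already been constructed via the universal property of the relevant coequalizer, and the four-step verification (items (i)–(iv) preceding the lemma) shows that each $\widetilde{\varphi}_{X,Y}$ is an isomorphism. It remains only to observe that this tensor structure is automatically compatible with the $G$-gradings, since $\widetilde{\varphi}_{X,Y}$ is a morphism in the homogeneous component of degree $gh$ whenever $X\in\cc_g$ and $Y\in\cc_h$, by the grading preservation of $R$. Combining all of the above yields the desired $G$-graded tensor equivalence $\cc\simeq\mod_A(\cz_G(\cc))$.

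I do not anticipate any real obstacle here: the only subtle points — commutativity of $A$ and invertibility of $\widetilde{\varphi}_{X,Y}$ — are already established, and the grading statements reduce to bookkeeping via Propositions \ref{graded_innhom} and \ref{modules_graded}. The proof is therefore essentially an assembly, with the most delicate piece being the verification that $\widetilde{\varphi}_{X,Y}$ is an isomorphism, which rests crucially on the surjectivity of the forgetful functor from the relative center.
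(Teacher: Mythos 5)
Your proposal is correct and follows essentially the same route as the paper: the paper's proof is literally ``follows considering the construction given above,'' and your write-up is a faithful assembly of exactly that construction (the algebra $A=\innhom(\mathbf{1},\mathbf{1})$, the functor $R$, its grading via Proposition \ref{graded_innhom}, and the tensor structure $\widetilde{\varphi}_{X,Y}$ with the four-step isomorphism check). No gaps to report.
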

\begin{Proposition}
Let $\cc$ and $\cd$ be $G$-graded finite tensor categories. Provided that $\cz_G(\cc)$ and $\cz_G(\cd)$ are equivalent as central $G$-extensions, then $\cc$ and $\cd$ are graded Morita equivalent.
\begin{proof}
Let $B$ be the commutative algebra in $\cz_G(\cd)_e$ constructed in Lemma \ref{lemma_algebra} and let $\Lambda:\cz_G(\cd)\simeq\cz_G(\cc)$ be an equivalence of central $G$-extensions, then $L:=\Lambda(B)$ is a commutative algebra in $\cz_G(\cc)_e$ and 
\begin{equation}
    \label{D=mod_L}
    \cd\simeq\mod_B(\cz_G(\cd))\simeq\mod_L(\cz_G(\cc))
\end{equation}
as $G$-graded tensor categories, where the first equivalence comes from Lemma \ref{lemma_algebra} and the second equivalence is induced by $\Lambda$.\\

Now notice that $\mod_{F(L)}(\cc)$ is an exact $G$-graded $\cc$-module category: let $A$ be the commutative algebra in $\cz_G(\cc)_e$ from Lemma \ref{lemma_algebra}, then $\cc\simeq\mod_A(\cz_G(\cc))$.
\begin{enumerate}[(i)]
    \item Since $\cd\simeq\mod_B(\cz_G(\cd))$ is exact over $\cz_G(\cd)$, then $\mod_L(\cz_G(\cc))$ is exact as a $\cz_G(\cc)$-module category.
    \item From \cite[Proposition 7.12.14]{EGNO} and Proposition \ref{grading_module_functors}, the category
    \begin{align*}
        \Fun_{\cz_G(\cc)}(\mod_L(\cz_G(\cc)),\cc)\simeq{}_L\bimod_A(\cz_G(\cc))
    \end{align*}
    is an exact $G\op$-graded module category over $(\cz_G(\cc))_\cc^{*}\simeq{}_A\bimod_A(\cz_G(\cc))\op$.
    \item Since $\cc\simeq \mod_A(\cz_G(\cc))$, then $\mod_{F(L)}(\cc)\simeq{}_L\bimod_A(\cz_G(\cc))$.
    \item From (ii) and (iii) it follows that $\mod_{F(L)}(\cc)$ is an exact  $G\op$-graded module category over
    $$(\cz_G(\cc))_\cc^{*}\simeq((\cc_e\boxtimes\cc\op)_\cc^{*})_\cc^{*}\simeq \cc_e\boxtimes\cc\op$$
    therefore it is in particular exact over $\cc_e$, and thus $\cm_e=\mod_{F(L)}(\cc_e)$ is an exact $\cc_e$-module category. From Lemma \ref{exact_induction} it follows that $\mod_{F(L)}(\cc)$ is exact over $\cc$.
\end{enumerate}

On the other hand, the module category $\cd\simeq\mod_B(\cz_G(\cd))$ is indecomposable over the dual category $(\cd_e\boxt\cd\op)_\cd^{*}\simeq\cz_G(\cd)$, hence under $\Lambda$ the module category $\mod_L(\cz_G(\cc))$ is indecomposable over $\cz_G(\cc)$. But the forgetful image $\mod_{F(L)}(\cc)$ might be decomposable over $\cc$. Since $\mod_{F(L)}(\cc)$ is exact over $\cc$, it follows from Proposition \ref{algebra_decomposition} that there is a decomposition of $\cc$-module categories
\begin{align*}
    \mod_{F(L)}(\cc)\simeq\bigoplus_{i\in I}\mod_{L_i}(\cc)
\end{align*}
where $L_i$ is an exact indecomposable algebra in $\cc_e$ for all $i\in I$ and $F(L)$ decomposes as $\prod_{i\in I} L_i$ as an algebra. Furthermore, the category of bimodules decomposes as follows
\begin{align}\label{bimod_decomp}
    {}_{F(L)}\bimod_{F(L)}(\cc)\simeq\bigoplus_{i,j\in I} {}_{L_i}\bimod_{L_j}(\cc)
\end{align}
Now consider the following commutative diagram,
\begin{equation*}
\begin{tikzcd}
\cz_G(\cc)\ar[dd,"Z\mapsto Z\ot L",swap]\ar[rr,"S_G"]&&\cz_G\left({}_{L_i}\bimod_{L_i}(\cc)\right)\ar[dd,"F_i"]\\\\
\mod_{L}(\cz_G(\cc))\subset{}_{L}\bimod_{L}(\cz_G(\cc))\ar[r,"\overline{F}",swap]
&{}_{F(L)}\bimod_{F(L)}(\cc) \ar[r,"\pi_i",swap]&{}_{L_i}\bimod_{L_i}(\cc)
\end{tikzcd}
\end{equation*}
where the equivalence $S_G$ comes from Proposition \ref{equivariant_centers_eq} and $\pi_i$ is the canonical projection of the direct sum (\ref{bimod_decomp}). Now since the forgetful functor $F_i$ is surjective (see Remark \ref{C_exact_over_center} (i)), then we have a surjective graded functor
\begin{equation*}
    H_i:=\pi_i\,\circ\,\overline{F}:\;\mod_{L}(\cz_G(\cc))\longrightarrow{}_{L_i}\bimod_{L_i}(\cc) 
\end{equation*}
between tensor categories of the same Frobenius-Perron dimension, and thus $H_i$ is an equivalence. Indeed, 
\begin{enumerate}[(i)]
    \item From \cite[Corollary 7.16.7]{EGNO} $\FP({}_{L_i}\bimod_{L_i}(\cc))=\FP(\cc)$.
    \item Since $\Lambda$ is an equivalence, $$\frac{1}{|G|}\FP(\cd)^2=\FP(\cz_G(\cd))=\FP(\cz_G(\cc))=\frac{1}{|G|}\FP(\cc)^2$$
    and thus $\FP(\cd)=\FP(\cc)$.
    \item From  (ii) and equivalence (\ref{D=mod_L}) it follows that
    $$\FP(\mod_{L}(\cz_G(\cc)))=\FP(\cd)=\FP(\cc)$$
\end{enumerate}
Summarizing, there is an exact $G$-graded $\cc$-module category $\cm=\mod_{L_i}(\cc)$ and a graded tensor equivalence $\cd\simeq{}_{L_i}\bimod_{L_i}(\cc)\simeq\cc_\cm^{*}\op$.
\end{proof}
\label{eq_centers_imply_Morita}
\end{Proposition}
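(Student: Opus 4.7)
The plan is to produce an exact indecomposable $G$-graded $\cc$-module category $\cm$ together with a $G$-graded tensor equivalence $\cd\simeq\cc^*_\cm\op$. First I would invoke Lemma~\ref{lemma_algebra} to write $\cd\simeq\mod_B(\cz_G(\cd))$ for some commutative algebra $B\in\cz_G(\cd)_e$. Transporting $B$ along the given equivalence $\Lambda\colon\cz_G(\cd)\xrightarrow{\sim}\cz_G(\cc)$ of central $G$-extensions yields a commutative algebra $L:=\Lambda(B)\in\cz_G(\cc)_e$ together with a $G$-graded tensor equivalence $\cd\simeq\mod_L(\cz_G(\cc))$. The natural candidate for $\cm$ will be a component of $\mod_{F(L)}(\cc)$, where $F\colon\cz_G(\cc)\to\cc$ is the forgetful functor.

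Next I would check that $\mod_{F(L)}(\cc)$ is exact as a $G$-graded $\cc$-module category. Via Eilenberg--Watts (Proposition~\ref{EW_graded}) and the identification $(\cz_G(\cc))^*_{\cc}\simeq\cc_e\boxt\cc\op$ from Remark~\ref{C_exact_over_center}(ii), the category $\Fun_{\cz_G(\cc)}(\mod_L(\cz_G(\cc)),\cc)\simeq{}_L\bimod_A(\cz_G(\cc))\simeq\mod_{F(L)}(\cc)$ inherits the structure of an exact $G\op$-graded module category over $\cc_e\boxt\cc\op$; in particular it is exact over $\cc_e$, whence by Lemma~\ref{exact_induction} exact over $\cc$. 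However, although $\mod_L(\cz_G(\cc))\simeq\cd$ is indecomposable over $\cz_G(\cc)$, its forgetful image $\mod_{F(L)}(\cc)$ need not be indecomposable over $\cc$. I would therefore apply Proposition~\ref{algebra_decomposition} to split $F(L)=\prod_{i\in I}L_i$ into exact indecomposable algebras in $\cc_e$, inducing
\[
\mod_{F(L)}(\cc)\simeq\bigoplus_{i\in I}\mod_{L_i}(\cc),\qquad {}_{F(L)}\bimod_{F(L)}(\cc)\simeq\bigoplus_{i,j\in I}{}_{L_i}\bimod_{L_j}(\cc).
\]

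For any fixed $i\in I$, the key step is to build a $G$-graded tensor functor
\[
H_i\colon\mod_L(\cz_G(\cc))\longrightarrow{}_{L_i}\bimod_{L_i}(\cc)
\]
as the composition of the Schauenburg-type graded equivalence $S_G\colon\cz_G(\cc)\to\cz_G({}_{F(L)}\bimod_{F(L)}(\cc))$ from Proposition~\ref{equivariant_centers_eq}, the forgetful functor $\cz_G({}_{F(L)}\bimod_{F(L)}(\cc))\to{}_{F(L)}\bimod_{F(L)}(\cc)$, the projection $\pi_i$, and the inclusion $\mod_L(\cz_G(\cc))\subset{}_L\bimod_L(\cz_G(\cc))$ induced by commutativity of $L$. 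Surjectivity of $H_i$ follows from surjectivity of the forgetful functor from a relative center (Remark~\ref{C_exact_over_center}(i)). Then a Frobenius--Perron dimension count finishes: by \cite[Cor.~7.16.7]{EGNO} one has $\FP({}_{L_i}\bimod_{L_i}(\cc))=\FP(\cc)$, and since $\Lambda$ is an equivalence one gets $\FP(\cz_G(\cd))=\FP(\cz_G(\cc))$, which together with $\FP(\cz_G(\cc))=\FP(\cc_e)\FP(\cc)=\tfrac{1}{|G|}\FP(\cc)^2$ forces $\FP(\cc)=\FP(\cd)=\FP(\mod_L(\cz_G(\cc)))$. A surjective tensor functor between tensor categories of equal Frobenius--Perron dimension is an equivalence, so $H_i$ is a $G$-graded tensor equivalence and
\[
\cd\;\simeq\;\mod_L(\cz_G(\cc))\;\simeq\;{}_{L_i}\bimod_{L_i}(\cc)\;\simeq\;\cc^*_{\mod_{L_i}(\cc)}\op.
\]

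The main obstacle I anticipate is verifying that $H_i$ is genuinely a well-defined $G$-graded \emph{tensor} functor rather than merely a graded module functor. This forces one to trace how the explicit description of $S_G$ in Remark~\ref{S_G} interacts with the embedding $\mod_L\hookrightarrow{}_L\bimod_L$ (using commutativity of $L$) and with the algebra decomposition $F(L)=\prod_i L_i$, in order to see that $H_i$ sends $\otimes_L$ to $\otimes_{L_i}$ compatibly with the grading. Once that compatibility is in place, the surjectivity plus the Frobenius--Perron argument closes the proof in a standard way.
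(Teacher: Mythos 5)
Your proposal follows essentially the same route as the paper: the same algebras $B$ and $L=\Lambda(B)$, the same exactness argument identifying $\mod_{F(L)}(\cc)$ with ${}_L\bimod_A(\cz_G(\cc))$ as a module category over $(\cz_G(\cc))_\cc^*\simeq\cc_e\boxt\cc\op$, the decomposition $F(L)=\prod_{i\in I}L_i$ via Proposition \ref{algebra_decomposition}, and the same surjectivity-plus-Frobenius--Perron count for $H_i$. One correction to how you set up the key functor: Proposition \ref{equivariant_centers_eq} applies only to exact \emph{indecomposable} graded module categories, and when $|I|>1$ the category ${}_{F(L)}\bimod_{F(L)}(\cc)$ is merely a multi-tensor category, so there is no Schauenburg-type equivalence onto its equivariant center as you state; as in the paper, one must take $S_G\colon\cz_G(\cc)\xrightarrow{\;\sim\;}\cz_G({}_{L_i}\bimod_{L_i}(\cc))$ for the indecomposable $L_i$, and $S_G$ is not literally a factor of $H_i$ (which is just $\pi_i\circ\overline F$ precomposed with $\mod_L(\cz_G(\cc))\subset{}_L\bimod_L(\cz_G(\cc))$): it enters only through the commutative square identifying $H_i\circ(Z\mapsto Z\ot L)$ with $F_i\circ S_G$, whose surjectivity (Remark \ref{C_exact_over_center}\,(i)) yields surjectivity of $H_i$. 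The compatibility of $H_i$ with the tensor structures that you flag as the remaining obstacle is indeed the delicate point, but the paper asserts it at the same level of detail as you do, so apart from the $L_i$ versus $F(L)$ slip your plan matches the paper's argument.
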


Considering the results of Proposition \ref{eq_centers_imply_Morita} and Theorem \ref{equivalent_equivariant_centers}, we obtain the following theorem.

\begin{Theorem}\label{charac_graded_Morita_eq}
Two $G$-graded finite tensor categories $\cc$ and $\cd$ are graded Morita equivalent if and only if $\cz_G(\cc)$ and $\cz_G(\cd)$ are equivalent as braided $G$-crossed tensor categories.
\end{Theorem}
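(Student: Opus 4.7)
My plan is to derive the theorem by packaging the two directions separately. The forward implication is already established as Theorem \ref{equivalent_equivariant_centers}: given a graded Morita equivalence realized by an exact algebra $A$ in $\cc_e$, Proposition \ref{equivariant_centers_eq} already delivers a braided $G$-crossed equivalence $\cz_G(\cc)\simeq\cz_G({}_A\bimod_A(\cc))\simeq\cz_G(\cd)$. So the substantive work is the converse, for which I would follow the strategy of \cite[Section 8.12]{EGNO} adapted to the graded setting.

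For the converse, the starting point is Lemma \ref{lemma_algebra}, which realizes any $G$-graded finite tensor category $\cc$ as $\mod_A(\cz_G(\cc))$ for a commutative algebra $A=\innhom(\mathbf{1},\mathbf{1})\in\cz_G(\cc)_e=\cz(\cc_e)$. Applying this to both $\cc$ (with algebra $A$) and $\cd$ (with algebra $B$), and using a braided $G$-crossed equivalence $\Lambda:\cz_G(\cd)\xrightarrow{\sim}\cz_G(\cc)$, I transport $B$ by setting $L:=\Lambda(B)\in\cz_G(\cc)_e$, so that $\cd\simeq\mod_L(\cz_G(\cc))$ as $G$-graded tensor categories (here Proposition \ref{modules_graded}(iii) together with Remark \ref{central_ext_braided_crossed} ensures $\mod_L(\cz_G(\cc))$ inherits a graded tensor structure). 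The candidate for the Morita dual module category will be extracted from the algebra $F(L)\in\cc_e$ obtained via the forgetful functor $F:\cz_G(\cc)\to\cc$, and its associated $\cc$-module category $\mod_{F(L)}(\cc)$. I would verify exactness of $\mod_{F(L)}(\cc)$ over $\cc$ by identifying it via the (graded) Eilenberg-Watts equivalence with ${}_L\bimod_A(\cz_G(\cc))$, viewing the latter as an exact module category over $(\cz_G(\cc))_\cc^{*}\simeq\cc_e\boxtimes\cc\op$ using Remark \ref{C_exact_over_center}(ii) and (iv), and then reducing exactness to the trivial component via Lemma \ref{exact_induction}.

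The main obstacle is that $\mod_{F(L)}(\cc)$ need not be indecomposable, even though $\mod_L(\cz_G(\cc))$ is indecomposable over $\cz_G(\cc)$; the forgetful step can genuinely destroy indecomposability. The remedy, drawing on Proposition \ref{algebra_decomposition}, is to decompose $F(L)=\prod_{i\in I}L_i$ into exact indecomposable algebras in $\cc_e$, producing a corresponding splitting
\[
{}_{F(L)}\bimod_{F(L)}(\cc)\simeq\bigoplus_{i,j\in I}{}_{L_i}\bimod_{L_j}(\cc).
\]
For each $i$, I would assemble a graded tensor functor
\[
H_i:\mod_L(\cz_G(\cc))\longrightarrow {}_{L_i}\bimod_{L_i}(\cc)
\]
as the composition of the graded Schauenburg-type equivalence $S_G$ of Remark \ref{S_G} with the canonical projection onto the $(i,i)$-summand; surjectivity of the forgetful functor from Remark \ref{C_exact_over_center}(i) implies $H_i$ is surjective. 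A Frobenius-Perron dimension count closes the argument: since $\Lambda$ is an equivalence one has $\FP(\cz_G(\cd))=\FP(\cz_G(\cc))$, hence $\FP(\cd)=\FP(\cc)=\FP({}_{L_i}\bimod_{L_i}(\cc))$ by \cite[Corollary 7.16.7]{EGNO}, which forces the surjective $H_i$ to be an equivalence. Combined with the graded Eilenberg-Watts equivalence of Proposition \ref{EW_graded}, this yields $\cd\simeq{}_{L_i}\bimod_{L_i}(\cc)\simeq\cc_\cm^{*}\op$ as $G$-graded tensor categories for $\cm:=\mod_{L_i}(\cc)$, exhibiting the desired graded Morita equivalence.
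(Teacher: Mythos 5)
Your proposal is correct and follows essentially the same route as the paper's proof of Proposition \ref{eq_centers_imply_Morita}: Lemma \ref{lemma_algebra}, transport of the algebra along $\Lambda$, exactness of $\mod_{F(L)}(\cc)$ by restriction to $\cc_e$, the decomposition $F(L)=\prod_{i}L_i$ from Proposition \ref{algebra_decomposition}, a surjective graded tensor functor onto ${}_{L_i}\bimod_{L_i}(\cc)$, and the Frobenius--Perron dimension count. The only imprecision is your description of $H_i$: it is not literally a composite of $S_G$ with a projection (their domains do not match), but rather the restriction to $\mod_L(\cz_G(\cc))$ of the forgetful functor to ${}_{F(L)}\bimod_{F(L)}(\cc)$ followed by $\pi_i$, with $S_G$ entering only through the commuting square $H_i\circ(-\ot L)\cong F_i\circ S_G$ that certifies surjectivity of $H_i$ — exactly as in the paper.
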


Taking into account \cite[Proposition 8.11]{DN}, Theorem \ref{charac_graded_Morita_eq} can be also described considering the equivariant centers as central $G$-extensions.


\end{document}